\newtheorem{theorem}{Theorem}[section]
\newtheorem{lemma}[theorem]{Lemma}
\newtheorem{proposition}[theorem]{Proposition}
\newtheorem{corollary}[theorem]{Corollary}
\newtheorem{definition}{Definition}[section]
\newcommand{\supp}{\operatorname{supp}}
\newcommand{\cupdot}{\mathbin{\mathaccent\cdot\cup}}
\begin{document}

\title{\textbf{On the number of minimal codewords in codes generated by the adjacency matrix of a graph}}
\author{Sascha Kurz\\
\small{sascha.kurz@uni-bayreuth.de}\\
\small{Mathematisches Institut, Universit\"at Bayreuth, Germany}}

\date{}

\maketitle

\begin{abstract}
  Minimal codewords have applications in decoding linear codes and in cryptography.  We study the number of minimal codewords in binary linear codes that arise by appending a unit 
  matrix to the adjacency matrix of a graph. 
\end{abstract}


\section{Introduction}
Given a linear code its minimal codewords are those whose supports, i.e., the set of nonzero coordinates, do not properly contain the support of another nonzero codeword. 
They have applications e.g.\ in secret sharing schemes \cite{carlet2005linear, Massey}, two-party computation \cite{CCP}, and decoding algorithms \cite{Agrell2,Hwang}.
Complete decoding is an NP-hard problem \cite{berlekamp1978inherent}, so that it is no surprise that determining the set of minimal codewords is also a hard problem. 
Their number can grow exponentially in the dimension or the length of the code. The cases where all codewords are minimal are called minimal codes (or intersecting codes 
in the binary situation). They have e.g.\ applications in combinatorics \cite{sloane1993covering}. Indeed, the set of minimal codewords is only known for a few classes 
of linear codes, including $q$-ary Hamming codes, see e.g. \cite{AB,Agrell}. For Reed--Muller codes the problem is only partially solved, see e.g.\ \cite{BM,SST} and 
the references cited therein.

Here we consider the concatenation of a unit matrix and the adjacency matrix of a graph as a generator matrix of a linear code and study the sets of minimal codewords. For 
some graph classes we can characterize the sets of minimal codewords and count them. We can fully solve the problem for complete multipartite graphs, paths, and cycles.  
We also state some lower and upper bounds for the number of minimal codewords in terms of graph parameters. For small numbers of vertices we determine the maximum 
and minimum number of minimal codewords of connected graphs. It turns out that the minimum number of minimal codewords is always attained by paths. In \cite{Minmin} graphs 
where associated to linear codes via their cycle space and the corresponding sets of minimal codewords are studied.

\section{Preliminaries}
An $[n,k]_q$ linear code $C$ is a $k$-dimensional subspace of $\mathbb{F}_q^n$.  Given a vector $x\in\mathbb{F}_q^n$, the support of $x$ is defined as 
$\supp(x)=\{i\,:\, x_i\neq 0, 1\leq i\leq n \}$.  A $k\times n$ matrix $G$ whose rows form a basis for $C$ is called a generator matrix. If $G=[I_k|A]$, where $I_k$ is the 
$k\times k$ identity matrix, then it is said to be systematic or in standard form. A nonzero codeword $c\in C$ is minimal if there does not exist a nonzero codeword $c'$ such 
that $\supp(c')\subsetneq \supp(c)$. Otherwise (including the case $c=\mathbf{0}$), we call the codeword $c$ non-minimal. Note that a codeword and its nonzero scalar multiples have the same 
support. We say that two codewords are equivalent if one is a scalar multiple of the other. We use the notation $M(C)$ for the number of non-equivalent minimal codewords of $C$. 
In the following we will mainly consider binary codes. For some known properties of minimal codewords we refer the interested reader e.g.\ to \cite{AB} and the references 
cited therein. 

Given a graph $\mathcal{G}=(V,E)$ with vertex set $V$ and edge set $E$, we denote by $C(\mathcal{G})$ the binary linear code that is generated by $[I_{\# V}|A]$, where 
$A$ is an adjacency matrix of $\mathcal{G}$. As a shorthand we use the notation $M(\mathcal{G})$ for $M(C(\mathcal{G}))$. Note that $M(\mathcal{G})$ does not depend 
on the labeling of the vertices. In the remaining part of the paper we will give brief definitions for most of the used notions from graph theory. For standard definitions 
like e.g.\ a connected graph or its connectivity components we refer the reader to some standard text book on graph theory like e.g.\ \cite{west2001introduction}.

First we observe that we can restrict ourselves to the study of $M(\mathcal{G})$ for connected graphs:
\begin{lemma}
  \label{lemma_connectivity_components}
  Let $\mathcal{G}=(V,E)$ consist of $r\ge 1$ connectivity components $\mathcal{G}_1=(V_1,E_1)$, $\mathcal{G}_r=(V_r,E_r)$, i.e., $\cupdot_{i=1}^r V_i=V$ and 
  $\cupdot_{i=1}^r E_i=E$. Then, $M(\mathcal{G})=\sum_{i=1}^r M(\mathcal{G}_i)$.  
\end{lemma}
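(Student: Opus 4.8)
The plan is to exploit the block structure that the connectivity components impose on the generator matrix, reducing the statement to a standard fact about minimal codewords of a direct sum of codes. First I would reorder the vertices so that vertices belonging to the same component are grouped together. With this labeling the adjacency matrix becomes block diagonal, $A=\mathrm{diag}(A_1,\dots,A_r)$ where $A_i$ is an adjacency matrix of $\mathcal{G}_i$, because there are no edges between distinct components. Since $M(\mathcal{G})$ does not depend on the labeling of the vertices, this causes no loss of generality. Writing $x=(x_1,\dots,x_r)$ with $x_i\in\mathbb{F}_2^{\#V_i}$, a generic codeword takes the form $(x,xA)=(x_1,\dots,x_r,x_1A_1,\dots,x_rA_r)$. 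After a further coordinate permutation that interleaves the identity and adjacency blocks component by component, this is exactly the codeword $\big((x_1,x_1A_1),\dots,(x_r,x_rA_r)\big)$ of the direct sum $C(\mathcal{G}_1)\oplus\cdots\oplus C(\mathcal{G}_r)$, with the $r$ summands occupying pairwise disjoint sets of coordinates.

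The core of the argument is then the following characterization, which I would prove next: a nonzero codeword $c=(c^{(1)},\dots,c^{(r)})$ of the direct sum is minimal if and only if exactly one block $c^{(j)}$ is nonzero and $c^{(j)}$ is a minimal codeword of $C(\mathcal{G}_j)$. Because the blocks sit on disjoint coordinates, $\supp(c)$ is the disjoint union of the $\supp(c^{(i)})$. For the ``only if'' direction, if two blocks $c^{(i)},c^{(j)}$ were simultaneously nonzero, then the codeword obtained by zeroing out every block except the $i$-th would be a nonzero codeword whose support is a proper subset of $\supp(c)$, contradicting minimality; and once $c$ is concentrated in a single block $j$, any nonzero $d\in C(\mathcal{G}_j)$ with $\supp(d)\subsetneq\supp(c^{(j)})$ embeds (padding with zeros on the other blocks) to a codeword violating minimality of $c$, so $c^{(j)}$ must itself be minimal. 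The ``if'' direction is symmetric: any nonzero codeword whose support lies inside $\supp(c^{(j)})$ is forced to vanish on all other blocks, hence restricts to a nonzero codeword of $C(\mathcal{G}_j)$ contained in $\supp(c^{(j)})$, which is impossible by minimality of $c^{(j)}$.

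Finally I would count. The characterization shows that the minimal codewords of $C(\mathcal{G})$ are exactly the zero-padded embeddings of the minimal codewords of the individual $C(\mathcal{G}_i)$ into their respective blocks. These embeddings are injective, and embeddings coming from different blocks or from different minimal codewords within a block yield distinct codewords; moreover, because we are in the binary case, scalar equivalence is trivial and respects the block decomposition, so the non-equivalent minimal codewords of $C(\mathcal{G})$ are in bijection with the disjoint union of the non-equivalent minimal codewords of the $C(\mathcal{G}_i)$. Summing over $i$ gives $M(\mathcal{G})=\sum_{i=1}^r M(\mathcal{G}_i)$. I expect the only point requiring genuine care to be the forward direction of the characterization---specifically, the observation that a minimal codeword cannot spread across two components, since this is where the disjointness of the supports does the real work; the coordinate-permutation bookkeeping and the final counting are routine.
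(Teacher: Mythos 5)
Your proof is correct and follows essentially the same route as the paper, which simply declares the result ``obvious from the direct sum $C(\mathcal{G})=\oplus_{i=1}^r C(\mathcal{G}_i)$''; you have merely filled in the details the paper leaves implicit (the block-diagonal form of the adjacency matrix, the characterization of minimal codewords in a direct sum, and the counting). No gaps: the key step---that a minimal codeword cannot have support meeting two components---is exactly the content hidden in the paper's one-line argument.
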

\begin{proof}
  The statement is obvious from the direct sum $C(\mathcal{G})=\oplus_{i=1}^r C(\mathcal{G}_i)$.  
\end{proof}

\begin{lemma}
  \label{lemma_trivial_lower_bound}
  For every graph $\mathcal{G}=(V,E)$ we have $M(\mathcal{G})\ge \# V$.
\end{lemma}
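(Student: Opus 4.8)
The plan is to produce $\# V$ explicit minimal codewords and verify their minimality directly; concretely, the rows of the systematic generator matrix $G=[I_{\# V}|A]$ themselves will do. Write $m=\# V$ and let $g_1,\dots,g_m$ denote these rows, so that $g_i$ carries a single $1$ among the first $m$ (information) coordinates, located in position $i$, followed by the $i$-th row of $A$. Since the first $m$ coordinates of $g_i$ form the standard basis vector $e_i$, the $g_i$ are pairwise distinct, and as $C(\mathcal{G})$ is binary they are automatically pairwise non-equivalent. Thus it suffices to show that each $g_i$ is minimal.

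The key structural observation, which I would isolate first, is that the systematic shape of $G$ lets one read off from any codeword which rows were used to form it. Indeed, an arbitrary codeword is $c=\sum_{j\in S}g_j$ for a unique $S\subseteq\{1,\dots,m\}$, and because the first $m$ columns of $G$ form the identity, the restriction of $c$ to those coordinates is exactly the indicator vector of $S$. Consequently $\supp(c)\cap\{1,\dots,m\}=S$, so the information set of a codeword is recovered as the projection of its support onto the first $m$ coordinates. In particular, if $c'$ corresponds to $S'$ and $\supp(c')\subseteq\supp(c)$, then $S'\subseteq S$.

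With this in hand, minimality of $g_i$ follows immediately. Suppose a nonzero codeword $c'$ satisfies $\supp(c')\subseteq\supp(g_i)$, and let $S'$ be its information set. Projecting onto the first $m$ coordinates gives $S'\subseteq\{i\}$; since $c'\neq\mathbf{0}$ forces $S'\neq\emptyset$ (the only codeword with empty information set is $\mathbf{0}$), we get $S'=\{i\}$ and hence $c'=g_i$. Therefore $\supp(g_i)$ cannot properly contain the support of any nonzero codeword, i.e.\ $g_i$ is minimal. Collecting the $m=\# V$ distinct minimal codewords $g_1,\dots,g_m$ yields $M(\mathcal{G})\ge\# V$.

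I do not anticipate a genuine obstacle here: the argument is entirely structural and uses only that $G$ is in standard form, not any property specific to adjacency matrices. The one point to state carefully is the claim that the projection onto the first $m$ coordinates determines the codeword (equivalently, that distinct information sets give distinct codewords), which is exactly the defining feature of a systematic generator matrix and simultaneously guarantees that the $g_i$ are distinct and that no nonzero codeword can hide strictly inside $\supp(g_i)$.
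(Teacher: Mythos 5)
Your proof is correct and follows exactly the route the paper takes: the paper's proof simply asserts that the $\# V$ rows of the systematic generator matrix $G=[I_{\# V}|A]$ are minimal codewords, and your argument supplies the straightforward verification (the systematic part of any codeword $c^{S'}$ is the indicator of $S'$, so a nonzero codeword with support inside $\supp(g_i)$ must equal $g_i$). Nothing is missing; you have merely made explicit the ``easy to check'' step.
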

\begin{proof}
  If $G=[I_{\# V}|A]$ is a generator matrix of $C(\mathcal{G})$, then it is easy to check that the $\# V$ rows of $G$ give minimal codewords.   
\end{proof}
This trivial lower bound is attained with equality for graphs without edges, i.e., $M((V,\emptyset))=\# V$. In order to obtain a more interesting problem, we 
define $m(n)$ as the minimum of $M(\mathcal{G})$, where $\mathcal{G}$ is a connected graph with $n$ vertices, i.e., we ask for the minimum number of 
minimal codewords a graph with $n$ vertices can give. Similarly, let $M(n)$ denote the maximum of $M(\mathcal{G})$, where $\mathcal{G}$ is a graph, not 
necessarily connected, with $n$ vertices.  

More generally, let $M_q(n,k)$ be the maximum and $m_q(n,k)$ the minimum of $M(C)$ for all $[n,k]_q$ codes $C$.  Bounds and some exact values on $M_q(n,k)$ and $m_q(n,k)$  
can be found in \cite{Agrell2,Maxmin2,Maxmin,AB,kiermaier2019minimum}. Obviously, we have
$$
  m_2(2n,n)\le m(n)\le M(n)\le M_2(2n,n).
$$  

Let $C$ be a linear $[k+t,k]_2$ code with systematic generator matrix $G$. By $g^i$ we denote the $i$th row of $G$, where $1\le i\le k$. For each subset $S\subseteq\{1,\dots,k\}$ let 
$c^S$ denote the sum of the rows of $G$ with indices in $S$, i.e., $c^S=\sum_{i\in S}g^i\in C$. For each codeword $c\in C$ let $c_S\in\mathbb{F}_2^k$ denote the systematic part of $c$, 
i.e., the restriction of $c$ to the first $k$ coordinates $c_1,\dots, c_k$. Similarly, for each codeword $c\in C$ let $c_I\in\mathbb{F}_2^t$ denote the information bits, i.e., 
the restriction of $c$ to the last $t$ coordinates $c_{k+1},\dots,c_{k+t}$. Next, we study some properties of minimal codewords in general binary linear codes. 

\begin{lemma}
  \label{lemma_zero_sum}
  Let $\emptyset\neq S\subseteq\{1,\dots,k\}$. If there exists a subset $\emptyset\neq T\subsetneq S$ with $c^T_I=\mathbf{0}$, then $c^S$ is non-minimal.
\end{lemma}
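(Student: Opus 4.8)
The plan is to exhibit an explicit nonzero codeword whose support is a proper subset of the support of $c^S$, and the natural candidate is $c^T$ itself. First I would unfold the notation: since $G=[I_k\mid A]$, writing $a^i$ for the $i$th row of $A$, each generator row is $g^i=(e_i,a^i)$, so that $c^S=\bigl(\mathbf{1}_S,\sum_{i\in S}a^i\bigr)$, where $\mathbf{1}_S$ denotes the indicator vector of $S$ in the first $k$ coordinates. The key structural observation is that the systematic part $c^S_S$ is exactly $\mathbf{1}_S$, so its support among the first $k$ coordinates equals $S$; in particular $S\subseteq\supp(c^S)$, whatever the information bits $\sum_{i\in S}a^i$ happen to be.

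Next I would apply the hypothesis to $T$. The assumption $c^T_I=\mathbf{0}$ says that the last $t$ coordinates of $c^T$ all vanish, so $c^T=(\mathbf{1}_T,\mathbf{0})$ and hence $\supp(c^T)=T$ lies entirely within the first $k$ coordinates. Since $T\neq\emptyset$, the codeword $c^T$ is nonzero.

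Finally I would combine the two facts. Because $T\subsetneq S$, choose $j\in S\setminus T$; then $j\in S\subseteq\supp(c^S)$ while $j\notin T=\supp(c^T)$, and every element of $T$ lies in $S\subseteq\supp(c^S)$. Hence $\supp(c^T)=T\subsetneq\supp(c^S)$, so the nonzero codeword $c^T$ certifies, directly from the definition, that $c^S$ is non-minimal.

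I do not expect a genuine obstacle: the entire content is bookkeeping about where the nonzero coordinates of $c^S$ and $c^T$ sit. The only point requiring a little care is the \emph{proper} (rather than merely non-strict) containment of supports, which is exactly where the strict hypothesis $T\subsetneq S$ is used — if $T=S$ we would only recover $c^T=c^S$ and conclude nothing. A secondary subtlety worth flagging is that the argument relies crucially on the systematic form $[I_k\mid A]$: it is precisely the identity block that guarantees $\supp(c^S)\supseteq S$ independently of $A$, so the same reasoning would not go through for a non-systematic generator matrix.
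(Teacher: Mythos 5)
Your proof is correct, and it differs from the paper's in the choice of witness codeword. The paper's one-line proof certifies non-minimality with $c^{S\backslash T}$ (over $\mathbb{F}_2$ this equals $c^S+c^T$): since $c^T_I=\mathbf{0}$, the information parts agree, $\supp\!\left(c^{S\backslash T}_I\right)=\supp\!\left(c^S_I\right)$, while the systematic parts satisfy $\supp\!\left(c^{S\backslash T}_S\right)=S\backslash T\subsetneq S=\supp\!\left(c^S_S\right)$, so $\supp\!\left(c^{S\backslash T}\right)\subsetneq\supp\!\left(c^S\right)$. You instead use $c^T$ itself: the hypothesis $c^T_I=\mathbf{0}$ pins down $\supp(c^T)=T$ exactly, and the identity block gives $T\subsetneq S\subseteq\supp(c^S)$. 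Both arguments are complete one-liners; yours is marginally more economical in that it needs no information whatsoever about $c^S_I$, whereas the paper's complement witness has the feature of leaving the information support untouched, which is precisely the manipulation reused when Lemma~\ref{lemma_supp_restricted_to_I} later invokes this lemma in its case $\supp(c^T_I)=\supp(c^S_I)$ by passing to $S\backslash T$. A small structural curiosity: the two hypotheses swap roles between the proofs. In yours, $T\neq\emptyset$ makes the witness nonzero and $T\subsetneq S$ makes the containment proper; in the paper's, $T\subsetneq S$ (i.e., $S\backslash T\neq\emptyset$) makes the witness nonzero and $T\neq\emptyset$ makes the containment proper. Your closing remark about the systematic form is also apt — the identity block is exactly what guarantees $S\subseteq\supp(c^S)$ independently of $A$ in your version, and what makes $\supp\!\left(c^{S\backslash T}_S\right)\subsetneq\supp\!\left(c^S_S\right)$ transparent in the paper's.
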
 
\begin{proof}
  Since $\supp\!\left(c^{S\backslash T}_I\right)=\supp\!\left(c^{S}_I\right)$ and $\supp\!\left(c^{S\backslash T}_S\right)\subsetneq \supp\!\left(c^{S}_S\right)$, we have 
  $\supp\!\left(c^{S\backslash T}\right)\subsetneq \supp\!\left(c^{S}\right)$.
\end{proof}

\begin{lemma}
  \label{lemma_supp_restricted_to_I}
  Let $\emptyset\neq S\subseteq\{1,\dots,k\}$. The codeword $c^S$ is non-minimal iff there exists a subset $\emptyset\neq T\subsetneq S$ with $\supp(c^T_I)\subseteq \supp(c^S_I)$.
\end{lemma}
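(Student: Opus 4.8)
The plan is to exploit the systematicity of $G=[I_k\mid A]$ in order to turn the statement about codeword supports into two independent statements about the systematic block $\{1,\dots,k\}$ and the information block $\{k+1,\dots,k+t\}$. The crucial observation is that the assignment $T\mapsto c^T$ is a bijection between subsets $T\subseteq\{1,\dots,k\}$ and codewords of $C$: since the first $k$ columns of $G$ form $I_k$, the systematic part $c^T_S$ is exactly the indicator vector of $T$, so $\supp\!\left(c^T_S\right)=T$, and $T$ is recovered from $c^T$ as $T=\supp\!\left(c^T_S\right)$. Because the systematic and the information coordinates occupy disjoint positions, every support splits as a disjoint union of its two blocks, and consequently, for any two subsets $T,S\subseteq\{1,\dots,k\}$, one has $\supp\!\left(c^T\right)\subseteq\supp\!\left(c^S\right)$ if and only if both $T\subseteq S$ and $\supp\!\left(c^T_I\right)\subseteq\supp\!\left(c^S_I\right)$. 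This equivalence is the engine of the whole argument.

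For the \emph{if} direction I would suppose $\emptyset\neq T\subsetneq S$ satisfies $\supp\!\left(c^T_I\right)\subseteq\supp\!\left(c^S_I\right)$. Then $c^T$ is nonzero, since $\supp\!\left(c^T_S\right)=T\neq\emptyset$, and by the block observation $\supp\!\left(c^T\right)\subseteq\supp\!\left(c^S\right)$; this inclusion is strict because already on the systematic block we have $T\subsetneq S$. Hence $c^T$ witnesses that $c^S$ is non-minimal.

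For the \emph{only if} direction I would assume $c^S$ is non-minimal, so that there is a nonzero codeword $c'$ with $\supp(c')\subsetneq\supp\!\left(c^S\right)$. Writing $c'=c^T$ with $T=\supp\!\left(c'_S\right)$ via the bijection above, nonzeroness of $c'$ forces $T\neq\emptyset$. Restricting the containment $\supp(c')\subseteq\supp\!\left(c^S\right)$ to the two blocks yields $T\subseteq S$ and $\supp\!\left(c^T_I\right)\subseteq\supp\!\left(c^S_I\right)$, while strictness rules out $T=S$ (which would give $c'=c^S$), leaving $T\subsetneq S$. This produces the required subset.

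The argument is essentially bookkeeping once the block decomposition is in place; the only point demanding care is the translation of \emph{proper} containment of full supports into the pair of conditions $T\neq\emptyset$ and $T\subsetneq S$, which is exactly what the nonzeroness and the strictness of $c'$ supply. I expect no genuine obstacle, but one must not conflate the two roles of the subscript $S$ — as the fixed index set and as the marker for the systematic part — when reading expressions such as $c^S_S$.
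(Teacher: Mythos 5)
Your proof is correct, and it rests on the same underlying mechanism as the paper's: the systematic form makes $T\mapsto c^T$ a bijection with $\supp\!\left(c^T_S\right)=T$, and supports split over the two coordinate blocks. The only-if direction is identical to the paper's. In the converse, however, you take a genuinely cleaner route: you extract strictness from the systematic block, where $\supp\!\left(c^T_S\right)=T\subsetneq S=\supp\!\left(c^S_S\right)$ holds automatically, so a single uniform step finishes the proof. The paper instead splits into two cases: when $\supp\!\left(c^T_I\right)\subsetneq\supp\!\left(c^S_I\right)$ it argues directly (in a sentence whose displayed inclusions have the blocks transposed -- evidently a typo, as written the chain ``$\supp(c^T_I)\subsetneq \supp(c^S)$ implies $\supp(c^T)\subsetneq \supp(c^S_I)$'' is not meaningful), and when $\supp\!\left(c^T_I\right)=\supp\!\left(c^S_I\right)$ it uses binarity (equal supports of binary vectors mean equal vectors) to get $c^{S\backslash T}_I=\mathbf{0}$ and then invokes Lemma~\ref{lemma_zero_sum}. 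What the paper's route buys is the recycling of Lemma~\ref{lemma_zero_sum}, which it also uses in Corollary~\ref{cor_max_card_S} and Lemma~\ref{lemma_zero_sum_characterization}; what your route buys is self-containedness (no case split, no auxiliary lemma) and the explicit statement of the block equivalence $\supp\!\left(c^T\right)\subseteq\supp\!\left(c^S\right)$ iff $T\subseteq S$ and $\supp\!\left(c^T_I\right)\subseteq\supp\!\left(c^S_I\right)$, which also makes the only-if direction's implicit appeal to the bijection fully rigorous. One could even note that your argument nowhere uses the field being $\mathbb{F}_2$ beyond the definition of $c^T$, whereas the paper's equal-support case does.
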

\begin{proof}
  Since $S\neq \emptyset$ we have $c^S\neq \mathbf{0}$. Thus, if $c^S$ is non-minimal, there exists a subset $\emptyset\neq T\subsetneq S$ with $\supp(c^T)\subsetneq \supp(c^S)$, 
  so that $\supp(c^T_I)\subseteq \supp(c^S_I)$. For the other direction let $\emptyset\neq T\subsetneq S$ with $\supp(c^T_I)\subseteq \supp(c^S_I)$. If $\supp(c^T_I)\neq \supp(c^S_I)$, 
  then $\supp(c^T_I)\subsetneq \supp(c^S)$ implies $\supp(c^T)\subsetneq \supp(c^S_I)$ so that $c^S$ is non-minimal by definition. If $\supp(c^T_I)= \supp(c^S_I)$, then 
  $c^{S\backslash T}_I=\mathbf{0}$ and we can apply Lemma~\ref{lemma_zero_sum}.
\end{proof}  

\begin{corollary}
  \label{cor_max_card_S}
  Let $c^S$ be a minimal codeword. Then, we have $1\le \#S\le t+1$. Moreover, if $\# S=t+1$, then $c^S_I=\mathbf{0}$.   
\end{corollary}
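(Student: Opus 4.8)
The lower bound $\#S \ge 1$ is immediate: a minimal codeword is nonzero, so $S \neq \emptyset$. The substance is the upper bound together with the equality case, and the plan is to translate the minimality criterion of Lemma~\ref{lemma_supp_restricted_to_I} into a statement about linear independence over $\mathbb{F}_2$.

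First I would record that $c^S_I = \sum_{i\in S} c^{\{i\}}_I$ depends linearly on $S$, and set $J := \supp(c^S_I) \subseteq \{1,\dots,t\}$ with $\ell := \#J \le t$. Let $\pi$ denote the coordinate projection $\mathbb{F}_2^t \to \mathbb{F}_2^{t-\ell}$ that forgets the coordinates in $J$, and put $v_i := \pi\big(c^{\{i\}}_I\big)$ for $i\in S$. By the very choice of $J$ we have $\pi(c^S_I)=\mathbf{0}$, that is, $\sum_{i\in S} v_i = \mathbf{0}$.

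The key step is the following reformulation. Since $c^S$ is minimal, Lemma~\ref{lemma_supp_restricted_to_I} tells us that no nonempty proper subset $T\subsetneq S$ satisfies $\supp(c^T_I)\subseteq J$; equivalently, $\pi(c^T_I)=\sum_{i\in T} v_i \neq \mathbf{0}$ for every such $T$. Fixing any $j_0\in S$, I would deduce that the vectors $\{v_i : i\in S\setminus\{j_0\}\}$ are linearly independent in $\mathbb{F}_2^{t-\ell}$: any nontrivial linear relation among them corresponds to a nonempty subset $T\subseteq S\setminus\{j_0\}$, which is automatically a proper subset of $S$, with $\sum_{i\in T} v_i=\mathbf{0}$, and this is exactly what minimality forbids.

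Counting dimensions then finishes the argument: $\#S - 1 = \#(S\setminus\{j_0\}) \le t-\ell \le t$, so $\#S \le t+1$, and equality forces $\ell = \#\supp(c^S_I)=0$, i.e.\ $c^S_I=\mathbf{0}$. The only real obstacle is the reformulation in the previous paragraph --- once the minimality condition is seen as ``the only subset-sum relations among the $v_i$ are the empty one and the full one,'' the linear-independence claim and the dimension count are routine. The degenerate cases ($\#S=1$, or $t=0$) are handled automatically, since the independence statement is then vacuous.
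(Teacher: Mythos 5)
Your proposal is correct, and it takes a recognizably different route from the paper. The paper's proof rests on Lemma~\ref{lemma_zero_sum} alone: since at most $t$ vectors in $\mathbb{F}_2^t$ can be linearly independent, any $S$ with $\#S\ge t+1$ contains a nonempty zero-sum subset $T$ with $c^T_I=\mathbf{0}$; if $T\subsetneq S$ this kills minimality, which forces $\#S\le t+1$ and (implicitly) forces $T=S$ in the boundary case, giving $c^S_I=\mathbf{0}$. You instead invoke the stronger characterization of Lemma~\ref{lemma_supp_restricted_to_I}, project away the coordinates in $J=\supp\!\left(c^S_I\right)$, and turn minimality into genuine linear \emph{independence} of the projected vectors $v_i$, $i\in S\setminus\{j_0\}$, in $\mathbb{F}_2^{t-\ell}$ (your observation that a repeated vector or any nontrivial relation would produce a forbidden nonempty proper subset-sum is exactly right over $\mathbb{F}_2$). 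The payoff is the sharper inequality $\#S\le t+1-\#\supp\!\left(c^S_I\right)$, from which both assertions of the corollary fall out in one stroke; in particular the ``moreover'' clause, which the paper's terse proof leaves to the reader (one must argue that the forced dependence relation involves all of $S$ when $\#S=t+1$), is handled explicitly and cleanly in your version. The paper's argument is shorter and needs only the weaker lemma; yours costs the projection bookkeeping but buys a quantitative refinement trading the size of $S$ against the weight of the information part, which the corollary's statement does not record but which is genuinely stronger.
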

\begin{proof}
  The largest cardinality of a set of linearly independent vectors in $\mathbb{F}_2^t$ is $t$. Thus, if $\#S\ge t+1$, then there exists a subset $T\subseteq S$ with 
  $c^T_I=\mathbf{0}$ and $\#T\le t+1$. We finally apply Lemma~\ref{lemma_zero_sum} to conclude $\#S\le t+1$.  
\end{proof}

\begin{lemma}
  \label{lemma_zero_sum_characterization}
  Let $\emptyset\neq S\subseteq\{1,\dots,k\}$ be a subset such that $c^S_I=\mathbf{0}$. Then, $c^S$ is minimal iff $c^T_I\neq\mathbf{0}$ for all $\emptyset\neq T\subsetneq S$.
\end{lemma}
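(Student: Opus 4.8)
The plan is to obtain this statement as a direct specialization of Lemma~\ref{lemma_supp_restricted_to_I}, which already characterizes the non-minimality of $c^S$ through the information parts $c^T_I$ of its proper nonempty subsums. That lemma tells us that $c^S$ is non-minimal precisely when there is some $\emptyset\neq T\subsetneq S$ with $\supp(c^T_I)\subseteq\supp(c^S_I)$. So the entire argument reduces to understanding what this containment condition says once the extra hypothesis $c^S_I=\mathbf{0}$ is imposed.

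The key step is to exploit that hypothesis. Under $c^S_I=\mathbf{0}$ we have $\supp(c^S_I)=\emptyset$, so the containment $\supp(c^T_I)\subseteq\supp(c^S_I)$ degenerates to $\supp(c^T_I)=\emptyset$, i.e.\ to $c^T_I=\mathbf{0}$. Substituting this into Lemma~\ref{lemma_supp_restricted_to_I} shows that $c^S$ is non-minimal if and only if there exists $\emptyset\neq T\subsetneq S$ with $c^T_I=\mathbf{0}$; taking the contrapositive gives exactly the claimed equivalence.

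As a cross-check one can also argue the two implications separately: the forward direction (if $c^S$ is minimal then $c^T_I\neq\mathbf{0}$ for all $\emptyset\neq T\subsetneq S$) is simply the contrapositive of Lemma~\ref{lemma_zero_sum}, while the reverse direction again reduces to the degenerate case of Lemma~\ref{lemma_supp_restricted_to_I}. I do not anticipate any real obstacle here; the only point requiring care is noticing that the hypothesis $c^S_I=\mathbf{0}$ collapses the general support-containment condition into the plain requirement that $c^T_I$ be the zero vector, after which the equivalence is immediate.
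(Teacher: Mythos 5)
Your proof is correct. It differs from the paper's in its choice of key lemma: you obtain the whole equivalence in one stroke by specializing Lemma~\ref{lemma_supp_restricted_to_I} to the degenerate case $\supp\!\left(c^S_I\right)=\emptyset$, under which the containment $\supp\!\left(c^T_I\right)\subseteq\supp\!\left(c^S_I\right)$ collapses to $c^T_I=\mathbf{0}$, and this specialization is valid as stated. The paper, by contrast, never invokes Lemma~\ref{lemma_supp_restricted_to_I} here: it proves the implication ``non-minimal $\Rightarrow$ some $c^T_I=\mathbf{0}$'' directly from the definition (a nonzero codeword with support strictly inside $\supp\!\left(c^S\right)$ must equal $c^T$ for some $\emptyset\neq T\subsetneq S$, and $c^S_I=\mathbf{0}$ then forces $c^T_I=\mathbf{0}$), and cites Lemma~\ref{lemma_zero_sum} for the converse --- which is exactly your own ``cross-check'' argument. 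The two derivations are logically the same mathematics, since the paper's proof of Lemma~\ref{lemma_supp_restricted_to_I} is itself assembled from the definition plus Lemma~\ref{lemma_zero_sum}; your packaging is slightly more economical, while the paper's keeps this lemma dependent only on Lemma~\ref{lemma_zero_sum} and first principles. The one point you rely on implicitly --- that $S\neq\emptyset$ guarantees $c^S\neq\mathbf{0}$ via the systematic part, so minimality is well-posed --- is correctly inherited from the hypotheses of Lemma~\ref{lemma_supp_restricted_to_I} in your route.
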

\begin{proof}
  Since $S\neq \emptyset$ we have $c^S\neq \mathbf{0}$. If $c^S$ is non-minimal, then there exists a subset $\emptyset\neq T\subsetneq S$ with $\supp(c^T)\subsetneq\supp(c^S)$. 
  Since $c^S_I=\mathbf{0}$ this implies $c^T_I=\mathbf{0}$. For the other direction we apply Lemma~\ref{lemma_zero_sum}. 
\end{proof}

We have already observed that $c^S$ is minimal for all subsets $S\subseteq\{1,\dots,n\}$ of cardinality $1$. In a code $C(\mathcal{G})$ obtained from a graph 
$\mathcal{G}$ also the case of cardinality $\#S=2$ can be characterized easily:

\begin{lemma}
  \label{lemma_common_neighbor}
  Let $\mathcal{G}=(V,E)$ be a graph and $C=C(\mathcal{G})$ be its associated code. For $S=\{v_1,v_2\}$ the codeword $c^S$ is minimal iff 
  $v_1$ and $v_2$ have a common neighbor $v_3$ (where we assume that the vertices $v_1$, $v_2$, and $v_3$ are pairwise different).
\end{lemma}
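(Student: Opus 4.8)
The plan is to apply Lemma~\ref{lemma_supp_restricted_to_I} directly and translate its set-theoretic condition into graph language. First I would record what the information parts look like. For a single vertex $v$, the row $g^v$ has systematic part $e_v$ and information part equal to the $v$-th row of $A$, so $\supp\!\left(c^{\{v\}}_I\right)$ is exactly the neighborhood $N(v)$ of $v$. For $S=\{v_1,v_2\}$ the information part $c^S_I$ is the (mod $2$) sum of the two adjacency rows, whose $j$-th entry equals $1$ precisely when exactly one of $v_1,v_2$ is adjacent to $j$; hence $\supp\!\left(c^S_I\right)=N(v_1)\,\triangle\,N(v_2)$, the symmetric difference of the two neighborhoods.

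Next, since $\#S=2$, the only nonempty proper subsets $T\subsetneq S$ are the two singletons $\{v_1\}$ and $\{v_2\}$. By Lemma~\ref{lemma_supp_restricted_to_I}, the codeword $c^S$ is therefore non-minimal iff $N(v_1)\subseteq N(v_1)\,\triangle\,N(v_2)$ or $N(v_2)\subseteq N(v_1)\,\triangle\,N(v_2)$.

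The key step --- and the only place where an actual computation happens --- is to simplify these two containments. I would observe that for $j\in N(v_1)$ one has $j\in N(v_1)\,\triangle\,N(v_2)$ iff $j\notin N(v_2)$, so that $N(v_1)\subseteq N(v_1)\,\triangle\,N(v_2)$ holds exactly when no neighbor of $v_1$ is a neighbor of $v_2$, i.e.\ when $N(v_1)\cap N(v_2)=\emptyset$. The symmetric argument applies to the second containment, so both collapse to the single condition $N(v_1)\cap N(v_2)=\emptyset$. Consequently $c^S$ is non-minimal iff $v_1$ and $v_2$ have no common neighbor, which is exactly the claimed statement in contrapositive form.

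Finally I would dispatch the distinctness remark: because $A$ is the adjacency matrix of a simple, loopless graph, its diagonal vanishes, so any $v_3\in N(v_1)\cap N(v_2)$ automatically satisfies $v_3\neq v_1$ and $v_3\neq v_2$; thus a common neighbor is necessarily a third, distinct vertex, matching the parenthetical assumption in the statement. I expect the whole argument to be short. The main (mild) obstacle is purely bookkeeping: correctly identifying $\supp\!\left(c^S_I\right)$ with the symmetric difference $N(v_1)\,\triangle\,N(v_2)$, and then recognizing that the two singleton containments both encode disjointness of the neighborhoods rather than some finer condition.
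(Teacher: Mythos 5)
Your proof is correct and takes essentially the same route as the paper: both arguments rest on the computation that $\supp\!\left(c^S_I\right)=N(v_1)\,\triangle\,N(v_2)$, so that a common neighbor $v_3$ cancels in $c^S_I$ while both singleton rows retain a one there (forcing minimality via Lemma~\ref{lemma_supp_restricted_to_I}, since the singletons are the only proper nonempty subsets), and disjoint neighborhoods give the containment that forces non-minimality. If anything, your version is the cleaner write-up --- the paper's proof contains a typo (it says $c^S_I$ has a one at position $v_3$ where it should be a zero) --- and your explicit treatment of the isolated-neighborhood case and of why a common neighbor is automatically distinct from $v_1,v_2$ is sound.
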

\begin{proof}
  If $v_1$ and $v_2$ do not have common neighbors, then $\supp\!\left(c^{\{v_1\}}\right)\subsetneq c^S$, so that $c^S$ is non-minimal. If $v_1$ and $v_2$ have a common 
  neighbor $v_3$ then $c^S_I$ has a one at position $v_3$ while $c^{\{v_1\}}$ and $c^{\{v_2\}}$ have a one at position $v_3$, so that $c^S$ is minimal. 
\end{proof}

A path between two vertices $u$ and $v$ is a sequence of distinct vertices $\left[v_0,\dots,v_l\right]$, such that $v_0=u$, $v_l=v$, and 
$\left\{v_i,v_{i+1}\right\}$ is an edge for all $0\le i<l$. Such a path is called a shortest path if $l$ is minimal. We also call $l$ the length of the 
path. In a connected graph the length of the shortest path between two vertices gives a metric, i.e., the distance between two vertices is the length of a shortest 
path connecting them. The diameter of a connected graph is the maximum distance between pairs of vertices. Graphs of diameter $1$ are called complete graphs and 
we will determine the corresponding number $M(\mathcal{G})$ of minimal codewords in Proposition~\ref{prop_complete_graph}. For graphs with diameter $2$ we have: 

\begin{corollary}
  For a graph $\mathcal{G}=(V,E)$ with diameter $2$ we have $M(\mathcal{G})\ge {{\# V+1}\choose 2} -\# E$.
\end{corollary}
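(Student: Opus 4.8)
The plan is to exhibit explicitly $\binom{\#V+1}{2}-\#E$ pairwise distinct minimal codewords, grouped into two families according to the cardinality of the defining index set $S$. First I would collect the $\#V$ minimal codewords of the form $c^{\{v\}}$ with $\#S=1$; these are exactly the rows of the generator matrix $[I_{\#V}|A]$, whose minimality is guaranteed by Lemma~\ref{lemma_trivial_lower_bound}.

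Second, I would count the minimal codewords $c^S$ with $\#S=2$. By Lemma~\ref{lemma_common_neighbor}, for $S=\{v_1,v_2\}$ the codeword $c^S$ is minimal precisely when $v_1$ and $v_2$ have a common neighbor. This is where the diameter hypothesis enters: if $v_1$ and $v_2$ are not adjacent, then their distance is at least $2$, and since the diameter equals $2$ it is exactly $2$; a shortest path $[v_1,v_3,v_2]$ of length $2$ then supplies a common neighbor $v_3$. Hence each of the $\binom{\#V}{2}-\#E$ non-adjacent pairs contributes a minimal codeword.

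Finally I would check that the two families are disjoint and internally distinct. Since the systematic part $c^S_S$ is just the indicator vector of $S$, distinct index sets give distinct codewords, and the two families are moreover separated by the weight ($1$ versus $2$) of their systematic part. Counting then yields $\#V+\binom{\#V}{2}-\#E$ distinct minimal codewords, and the elementary identity $\#V+\binom{\#V}{2}=\binom{\#V+1}{2}$ rewrites this as the asserted bound. There is no serious obstacle; the only point needing care is the translation of the diameter-$2$ condition into the existence of a common neighbor for every non-adjacent pair, together with the bookkeeping that the exhibited codewords are genuinely distinct (and non-equivalent, which is automatic over $\mathbb{F}_2$).
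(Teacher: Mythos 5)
Your proposal is correct and follows essentially the same route as the paper: singleton sets give $\#V$ minimal codewords, and the diameter-$2$ hypothesis converts each of the $\binom{\#V}{2}-\#E$ non-adjacent pairs into a pair with a common neighbor, so Lemma~\ref{lemma_common_neighbor} applies, and the identity $\#V+\binom{\#V}{2}=\binom{\#V+1}{2}$ finishes the count. Your extra check that the exhibited codewords are pairwise distinct (via the systematic parts) is a point the paper leaves implicit, but it introduces no difference in substance.
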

\begin{proof}
  For each subset $S\subseteq V$ of cardinality $1$ the codeword $c^S$ is minimal, which gives $\# V$ minimal codewords. 
  Now consider the ${{\#V} \choose 2}$ subset $S=\{u,v\}$ of cardinality $2$. If $\{u,v\}$ is not an edge in $\mathcal{G}$, then $u$ and $v$ are 
  at distance $2$ in $\mathcal{G}$. In other words, $u$ and $v$ have a common neighbor, so that we can apply Lemma~\ref{lemma_common_neighbor} to deduce the 
  minimality of $c^S$. Since $\# V+{{\#V} \choose 2}={{\#V+1} \choose 2}$, we obtain the stated lower bound.
\end{proof}

\section{The value of $M(\mathcal{G})$ for some graph classes}

Given the number $n$ of vertices, we will always set $V=\{1,\dots,n\}$ in this subsection. 
By $K_n$ we denote the complete graph on $n$ vertices, i.e., $V=\{1,\dots,n\}$ and $E=\{\{i,j\}\,:\, 1\le i<j\le n\}$. Obviously, we have $M(K_1)=1$ and $M(K_2)=2$. 
\begin{proposition}
  \label{prop_complete_graph}
  For each integer $n\ge 3$ we have
  $$
    M(K_n)=2^{n-1}+{n\choose 2}.
  $$      
\end{proposition}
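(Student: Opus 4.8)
The plan is to first determine the information part $c^S_I$ of the codeword $c^S$ explicitly. Writing $s=\#S$, the $j$th information coordinate of $c^S$ equals the number of neighbors of vertex $j$ lying in $S$, reduced modulo $2$. Since every two distinct vertices of $K_n$ are adjacent, this count is $\#(S\setminus\{j\})\bmod 2$, which equals $s-1$ for $j\in S$ and $s$ for $j\notin S$. Hence $\supp(c^S_I)=S$ when $s$ is even and $\supp(c^S_I)=V\setminus S$ when $s$ is odd. I expect this parity-driven complementation to be the crux of the whole argument.

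Next I would feed this description into Lemma~\ref{lemma_supp_restricted_to_I} and split according to the parity of $s$. Suppose $s$ is odd, so $\supp(c^S_I)=V\setminus S$, and take any $\emptyset\neq T\subsetneq S$. If $\#T$ is even then $\supp(c^T_I)=T\subseteq S$, which is nonempty and disjoint from $V\setminus S$, so $\supp(c^T_I)\not\subseteq\supp(c^S_I)$. If $\#T$ is odd then $\supp(c^T_I)=V\setminus T$, and $V\setminus T\subseteq V\setminus S$ would force $S\subseteq T$, contradicting $T\subsetneq S$. Thus no admissible $T$ exists and Lemma~\ref{lemma_supp_restricted_to_I} shows that $c^S$ is minimal for every odd $s$; this includes the degenerate case $S=V$ with $n$ odd, where $c^V_I=\mathbf{0}$ and minimality alternatively follows from Lemma~\ref{lemma_zero_sum_characterization}.

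For even $s$ I would argue that $c^S$ is minimal exactly when $s=2$. If $s\ge 4$, choose any two-element subset $T\subsetneq S$; then $\#T$ is even, so $\supp(c^T_I)=T\subseteq S=\supp(c^S_I)$, and $c^S$ is non-minimal by Lemma~\ref{lemma_supp_restricted_to_I}. If $s=2$, then any two vertices of $K_n$ share a common neighbor (here the hypothesis $n\ge 3$ is used), so $c^S$ is minimal by Lemma~\ref{lemma_common_neighbor}. For $n\ge 3$ the only even subset equal to $V$ has size $n\ge 4$, so it is already covered by the previous case.

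It remains to count. The minimal codewords correspond precisely to the subsets $S$ with $\#S$ odd together with those of cardinality exactly $2$, and these two families are disjoint. Distinct subsets yield distinct codewords because the systematic part of $c^S$ is the indicator vector of $S$, and in the binary setting each nonzero codeword is its own equivalence class, so $M(K_n)$ is simply the number of such subsets. There are $\sum_{k\text{ odd}}{n\choose k}=2^{n-1}$ odd subsets and ${n\choose 2}$ subsets of size two, giving $M(K_n)=2^{n-1}+{n\choose 2}$. The main obstacle is the odd-cardinality case: one must verify that the information support $V\setminus S$ cannot contain the information support of any proper nonempty subset, and it is exactly the parity flip from $S$ to $V\setminus S$ that rules out every such containment.
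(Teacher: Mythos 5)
Your proof is correct and follows essentially the same route as the paper: the same parity description of $\supp\!\left(c^S_I\right)$ (equal to $S$ for even $\#S$, to $V\setminus S$ for odd $\#S$), the same case split via Lemma~\ref{lemma_supp_restricted_to_I} and Lemma~\ref{lemma_common_neighbor}, and the same count $2^{n-1}+\binom{n}{2}$. If anything, your write-up handles the edge cases ($S=V$, the inclusion of singletons among the odd subsets) more explicitly than the paper's own argument.
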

\begin{proof}
  For some subset $\emptyset\neq S\subseteq \{1,\dots,n\}$ we can easily describe $c^S_I$. If $\# S\equiv 0\pmod 2$, then $c^S_I$ equals $1$ at position $v$ 
  iff $v\in S$ and $0$ otherwise. In the other case, $\# S\equiv 1\pmod 2$, we have that $c^S_I$ equals $1$ at position $v$ iff $v\not\in S$ and $0$ otherwise. So, if 
  the cardinality of $S$ is even and at least four, then we can choose at subset $T\subsetneq S$ of cardinality $T$ with $\supp(C^T_I)\subsetneq \supp(c^S_i)$, 
  i.e., the codeword $c^S$ is non-minimal. If $\#S=2$, then we can apply $n\ge 3$ and Lemma~\ref{lemma_common_neighbor} to deduce that $c^S$ is minimal. This gives  
  ${n\choose 2}$ cases of minimal codewords.  
  
  If $\# S=1$, then $c^S$ is minimal, which amounts to $n={n\choose 1}$ cases. Now let $\#S$ be odd and at least $3$. We have $c_I^S=\mathbf{0}$ iff $S=\{1,\dots, n\}$. 
  The only proper subset $T$ with $c_I^T=\mathbf{0}$ is $T=\emptyset$. Now let $\emptyset\neq T\subsetneq S$. If $\# T\equiv 1\pmod 2$, then $\supp\left(c^T_I\right) 
  \not\subseteq \supp(c^S_I)$, since $T\backslash S\neq\emptyset$. If $\# T\equiv 0\pmod 2$, then $\supp\left(c^T_I\right) 
  \not\subseteq \supp(c^S_I)$, since $T\neq\emptyset$. Thus, $c^S$ is minimal and there are $\sum_{1\le i\le n\,:\, i\text{ odd}} {n\choose i}$ 
  cases in total.    
  
  Thus, we have $M(K_n)={n\choose 2}+\sum_{1\le i\le n\,:\, i\text{ odd}} {n\choose i}$, which can be simplified further. Since $\sum_{i=0}^n {n\choose i}=2^n$ and 
  $\sum_{i=0}^n (-1)^i{n\choose i}=0$ the sum of odd binomial coefficients $\sum_{1\le i\le n\,:\, i\text{ odd}} {n\choose i}$ equals $2^{n-1}$, so that we obtain the 
  proposed formula. 
\end{proof}
  
For two positive integers we denote by $K_{a,b}$ the complete bipartite graph with vertex classes of size $a$ and $b$, respectively, i.e., for 
$A=\{1,\dots, a\}$ and $B=\{a+1,\dots,a+b\}$ we define the graph via $V=A\cup B$ and $E=\left\{\{\alpha,\beta\}\,:\, \alpha\in A, \beta\in B\right\}$.  

\begin{proposition}
  \label{prop_complete_bipartite_graph}
  For positive integers $a,b$ we have
  $$
    M(K_{a,b})=a+b+{a\choose 2}+{b\choose 2}.
  $$  
\end{proposition}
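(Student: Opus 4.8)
The plan is to analyse $c^S$ for every nonempty $S\subseteq V$ according to how $S$ splits between the two vertex classes. Write $S=S_A\cupdot S_B$ with $S_A=S\cap A$ and $S_B=S\cap B$, and set $s_A=\#S_A$, $s_B=\#S_B$. The first step is to compute the information part $c^S_I$ explicitly. Since $\mathcal{G}$ is complete bipartite, a vertex $w\in A$ is adjacent precisely to the vertices in $B$, so the coordinate of $c^S_I$ at a position $w\in A$ equals $s_B\bmod 2$; symmetrically, the coordinate at any $w\in B$ equals $s_A\bmod 2$. Hence $c^S_I$ is constant on each class, and its support is determined solely by the pair of parities: it equals $\emptyset$, $B$, $A$, or $A\cup B$ according to whether $(s_A\bmod 2,\,s_B\bmod 2)$ is $(0,0)$, $(1,0)$, $(0,1)$, or $(1,1)$. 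This clean description is the engine of the whole argument.

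The case $\#S=1$ always yields a minimal codeword, contributing $a+b$ codewords. For $\#S=2$ I would invoke Lemma~\ref{lemma_common_neighbor}: two vertices of the same class share every vertex of the opposite class as a common neighbor, and the opposite class is nonempty since $a,b\ge 1$, so $c^S$ is minimal; whereas a vertex of $A$ together with a vertex of $B$ has no common neighbor and gives a non-minimal codeword. This produces exactly $\binom{a}{2}+\binom{b}{2}$ minimal codewords from pairs.

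The main work is to show that every $S$ with $\#S\ge 3$ is non-minimal. If $s_A$ is odd, choose any $\alpha\in S_A$ and set $T=\{\alpha\}$; then $c^T_I$ is supported exactly on $B$, while the parity description gives $B\subseteq\supp(c^S_I)$, so $\supp(c^T_I)\subseteq\supp(c^S_I)$ and $c^S$ is non-minimal by Lemma~\ref{lemma_supp_restricted_to_I}. The symmetric choice $T=\{\beta\}$ with $\beta\in S_B$ disposes of the case $s_B$ odd. The remaining, and most delicate, case is $s_A$ and $s_B$ both even, where $c^S_I=\mathbf{0}$ and the previous trick fails because $\supp(c^S_I)=\emptyset$. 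Here $\#S=s_A+s_B$ is even, hence at least $4$, so one class contributes at least two vertices to $S$; taking $T$ to consist of two such vertices gives $c^T_I=\mathbf{0}$ with $\emptyset\neq T\subsetneq S$, and Lemma~\ref{lemma_zero_sum} (equivalently Lemma~\ref{lemma_zero_sum_characterization}) shows that $c^S$ is non-minimal.

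Collecting the contributions, the only minimal codewords are the $a+b$ singletons and the $\binom{a}{2}+\binom{b}{2}$ monochromatic pairs, which yields $M(K_{a,b})=a+b+\binom{a}{2}+\binom{b}{2}$. I expect the even--even subcase above to be the one requiring genuine care, since it is the only place where $c^S_I$ vanishes and one must separately produce a proper subset whose information part also vanishes; the parity bookkeeping everywhere else is routine once the formula for $c^S_I$ is in hand.
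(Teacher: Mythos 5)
Your proposal is correct and takes essentially the same route as the paper: the same parity formula for $c^S_I$, the same handling of $\#S\in\{1,2\}$ via Lemma~\ref{lemma_common_neighbor}, and Lemma~\ref{lemma_supp_restricted_to_I} (resp.\ Lemma~\ref{lemma_zero_sum}) to kill all $S$ with $\#S\ge 3$. The only cosmetic difference is the witness subset $T$: the paper uniformly deletes two vertices of the same class from $S$, which leaves $c^S_I$ unchanged and covers your odd and even--even subcases in one stroke, whereas you split by parity and use a singleton when one class meets $S$ in an odd number of vertices.
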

\begin{proof}
  For some subsets $A'\subseteq \{1,\dots, a\}$ and $B'\subseteq \{a+1,\dots,a+b\}$ with $S:=A'\cup B'\neq \emptyset$ we can easily describe $c^S_I$. The 
  value of $c^S_I$ at a position $\alpha\in A$ equals $\# B' \,\operatorname{rem}\, 2$, i.e., the remainder of $\# B'$ divided by $2$. Similarly, the 
  value of $c^S_I$ at a position $\beta\in B$ equals $\# A' \,\operatorname{rem}\, 2$.
  
  Every non-zero codeword can be written as $c^S$ for some subset $\emptyset\neq S\subseteq \{1,\dots,a+b\}$. We decompose $S=A'\cup B'$, where $A'\subseteq A$ 
  and $B'\subseteq B$. If $\# S\ge 3$ and $\# A'\ge 2$, then let $\tilde{A}\subseteq A'$ with $\#\tilde{A}=\# A'-2$. With this, we have 
  $\supp\!\left(c_I^{\tilde{A}\cup B'}\right)\subseteq \supp(c^S_I)$, i.e., $c^S$ is not a minimal codeword. If $\#S\ge 3$ and $\#A'\le 1$, then $\#B'\ge 2$ and we 
  can choose $\tilde{B}\subseteq B'$ with $\#\tilde{B}=\#B'-2$. Since $\supp\!\left(c_I^{A'\cup \tilde{B}}\right)\subseteq \supp(c^S_I)$ we again conclude that 
  $c^S$ is not a minimal codeword. If $\# S=1$, then $c^S$ is a minimal codeword. If $\#S=2$, then we can apply Lemma~\ref{lemma_common_neighbor} and 
  conclude that $c^S$ is minimal iff either $S\subseteq A$ or $S\subseteq B$.       
\end{proof}

Note that $K_2=K_{1,1}$.

\begin{corollary}
  For the star graph $K_{1,n-1}$ we have $M(K_{1,n-1})=n+{{n-1}\choose 2}=\frac{n^2-n +2}{2}$ for all $n\ge 2$.
\end{corollary}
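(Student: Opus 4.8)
The plan is to recognize that the star graph $K_{1,n-1}$ is precisely the complete bipartite graph $K_{a,b}$ with $a=1$ and $b=n-1$, and then invoke Proposition~\ref{prop_complete_bipartite_graph} directly. First I would check that the hypotheses of the proposition are met: for every $n\ge 2$ both $a=1$ and $b=n-1$ are positive integers, so the formula $M(K_{a,b})=a+b+\binom{a}{2}+\binom{b}{2}$ is available.

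Next I would substitute these values. This gives $M(K_{1,n-1})=1+(n-1)+\binom{1}{2}+\binom{n-1}{2}$. Using $\binom{1}{2}=0$ and $1+(n-1)=n$, the expression collapses to $n+\binom{n-1}{2}$, which is the first form claimed in the corollary.

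The only remaining task is the elementary simplification to the closed form. Writing $\binom{n-1}{2}=\tfrac{(n-1)(n-2)}{2}=\tfrac{n^2-3n+2}{2}$ and adding $n=\tfrac{2n}{2}$ produces $\tfrac{n^2-n+2}{2}$, as asserted. Since the statement is an immediate specialization of the bipartite count followed by routine arithmetic, there is no real obstacle to overcome; the corollary is essentially a sanity check confirming that the $a=1$ boundary case of Proposition~\ref{prop_complete_bipartite_graph} behaves as expected.
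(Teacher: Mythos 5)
Your proof is correct and matches the paper's intent exactly: the corollary is stated immediately after Proposition~\ref{prop_complete_bipartite_graph} with no proof precisely because it is the specialization $a=1$, $b=n-1$, followed by the routine simplification $n+\binom{n-1}{2}=\frac{n^2-n+2}{2}$ that you carry out. Nothing is missing.
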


Next we want to consider graph arising if we join the centers of two stars by an edge:
\begin{proposition}
  \label{prop_two_stars}
  Let $\mathcal{G}$ be a graph with two vertices $u$ and $v$ that are joined by an edge. The other $a+b$ vertices, where $a,b\ge 1$, are vertices of degree $1$, 
  where $a$ of them have $u$ as their unique neighbor and the other $b$ of them have $v$ as their unique neighbor. With this, we have 
  $M(\mathcal{G})=2+a+b+{{a+1}\choose 2}+{{b+1}\choose 2}$.
\end{proposition}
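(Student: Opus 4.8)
The plan is to reduce everything to the behaviour of $c^S_I$ as a function of a few parity counts. First I would fix notation: denote the two central vertices by $u,v$, the $a$ leaves adjacent to $u$ by $a_1,\dots,a_a$, and the $b$ leaves adjacent to $v$ by $b_1,\dots,b_b$, so that $\#V=a+b+2$. For a subset $\emptyset\neq S\subseteq V$ I set $p=1$ if $u\in S$ and $p=0$ otherwise, define $q$ analogously for $v$, and let $\alpha$ (resp.\ $\beta$) be the number of $u$-leaves (resp.\ $v$-leaves) contained in $S$. Reading off the adjacency matrix then gives $c^S_I$ explicitly: its entry at every $u$-leaf equals $p$, its entry at every $v$-leaf equals $q$, its entry at $u$ is the parity of $q+\alpha$, and its entry at $v$ is the parity of $p+\beta$. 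Hence $\supp(c^S_I)$ depends only on $p$, $q$, and the parities of $\alpha$ and $\beta$, which is exactly what makes the count tractable.

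Next I would dispose of the small supports. Every singleton $c^S$ is minimal, contributing $a+b+2$. For $\#S=2$ I would invoke Lemma~\ref{lemma_common_neighbor}: the pairs possessing a common neighbour are $\{a_i,a_j\}$ (neighbour $u$), $\{b_i,b_j\}$ (neighbour $v$), $\{v,a_i\}$ (neighbour $u$), and $\{u,b_j\}$ (neighbour $v$), whereas $\{u,v\}$, $\{u,a_i\}$, $\{v,b_j\}$, and $\{a_i,b_j\}$ admit none. This yields $\binom a2+\binom b2+a+b$ minimal pairs.

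The heart of the argument — and the step I expect to be the main obstacle — is to show that no $c^S$ with $\#S\ge 3$ is minimal; by Lemma~\ref{lemma_supp_restricted_to_I} it suffices to exhibit in each case a subset $\emptyset\neq T\subsetneq S$ with $\supp(c^T_I)\subseteq\supp(c^S_I)$. The clean part is a parity-preserving reduction: if $\alpha\ge 2$ I would delete two $u$-leaves from $S$ (and symmetrically if $\beta\ge 2$); this changes neither $p,q$ nor the parities of $\alpha,\beta$, so $\supp(c^T_I)=\supp(c^S_I)$, and $T\neq\emptyset$ because $\#S\ge 3$. This leaves only $\alpha,\beta\le 1$, which forces $\#S\in\{3,4\}$ — a bounded list that collapses under the symmetry $u\leftrightarrow v$, $a\leftrightarrow b$. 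For the representatives I would simply record a witness and check it against the description of $\supp(c^S_I)$ from the first step: $T=\{u\}$ for $S=\{u,v,a_i\}$, $T=\{a_i\}$ for $S=\{u,a_i,b_j\}$, and $T=\{u,b_j\}$ for $S=\{u,v,a_i,b_j\}$, with the remaining cases obtained by symmetry.

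Finally I would add the two surviving contributions and simplify using $\binom a2+a=\binom{a+1}2$ (and likewise for $b$): $(a+b+2)+\binom a2+\binom b2+a+b=2+a+b+\binom{a+1}2+\binom{b+1}2$, which is the claimed value. The only genuinely delicate point is the $\#S\ge 3$ analysis; once the leaf-deletion trick eliminates every case with $\alpha\ge 2$ or $\beta\ge 2$, what remains is a short finite verification rather than a real obstruction.
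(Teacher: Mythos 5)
Your proposal is correct and takes essentially the same route as the paper's proof: singletons, Lemma~\ref{lemma_common_neighbor} for pairs, the delete-two-leaves reduction showing $c^T_I=c^S_I$, and then a finite verification for $S\subseteq\{x,u,v,y\}$ via Lemma~\ref{lemma_supp_restricted_to_I}. Your only deviation is cosmetic: the explicit parity formula for $c^S_I$ together with three symmetry-reduced witnesses replaces the paper's exhaustive table of all fifteen subsets.
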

\begin{proof}
  By $n=2+a+b$ we denote the number of vertices of the graph and by $C$ the code $C(\mathcal{G})$ associated to $\mathcal{G}$. For each subset $S\subseteq\{1,\dots,n\}$ 
  of cardinality $1$ the codeword $c^S$ is minimal. For subsets $S$ of cardinality $2$ we apply Lemma~\ref{lemma_common_neighbor}. Counting the number of pairs of vertices 
  with a common neighbor gives ${{a+1}\choose 2}+{{b+1}\choose 2}$ choices. It remains to show that $c^S$ is non-minimal if $\#S\ge 3$. First we note $c^T_I=c^S_I$ 
  if $S$ arises from $T$ by adding two of the $a$ neighbors of $u$ of degree $1$. So, $c^S$ is non-minimal in that case. By symmetry, the same is true for the 
  $b$ neighbors of $v$ of degree $1$. So, let $x$ be an arbitrary neighbor of $u$ of degree $1$ and $y$ be an arbitrary neighbor of $v$ of degree $1$. It suffices 
  to consider $S\subseteq\{x,u,v,y\}$. In the following table we consider all choices for $S$ and abbreviate $c^S_I$ by just four binary entries. The second and third 
  entry correspond to vertex $u$ and vertex $v$, respectively. The first entry corresponds to vertex $x$ or any other neighbor of $u$ of degree $1$, noting that those 
  entries are all equal. Similarly, the fourth entry corresponds to vertex $y$ or any other neighbor of $v$ of degree $1$.   
  \begin{center}
    \begin{tabular}{llll}
      \hline
      $S$       & $c^S_I$     & $S$     & $c^S_I$      \\ 
      \hline
      $\{u\}$   & $(1,0,1,0)$ & $\{v\}$ & $(0,1,0,1)$  \\
      $\{x\}$   & $(0,1,0,0)$ & $\{y\}$ & $(0,0,1,0)$  \\  
      \hline
      $\{u,v\}$ & $(1,1,1,1)$ & $\{x,y\}$ & $(0,1,1,0)$ \\ 
      $\{x,u\}$ & $(1,1,1,0)$ & $\{v,y\}$ & $(0,1,1,1)$ \\
      $\{x,v\}$ & $(0,0,0,1)$ & $\{u,y\}$ & $(1,0,0,0)$ \\      
      \hline
      $\{x,u,v\}$ & $(1,0,1,1)$ & $\{u,v,y\}$ & $(1,1,0,1)$ \\
      $\{x,u,y\}$ & $(1,1,0,0)$ & $\{x,v,y\}$ & $(0,0,1,1)$ \\
      \hline
      $\{x,u,v,y\}$ & $(1,0,0,1)$ & & \\ 
      \hline
    \end{tabular}  
  \end{center}   
  The proof is finished by the easy but a bit tedious task to check that for all $S\subseteq \{x,u,v,y\}$ with $\#S\ge 3$ there exists a subset 
  $\emptyset\neq T\subsetneq S$ with $\supp\!\left(c^T_I\right)\subseteq\supp\!\left(c^S_I\right)$, so that we can apply Lemma~\ref{lemma_supp_restricted_to_I} to 
  conclude that $c^S$ is non-minimal.
\end{proof}

For an integer $r\ge 1$ and positive integers $a_1,\dots,a_r$ we denote by $K_{a_1,\dots,a_r}$ the complete multipartite graph, i.e., the vertex set of the 
$n=\sum_{i=1}^r a_i$ vertices is partitioned into $r$ classes such that two vertices are connected by an edge iff the come from different classes.

\begin{proposition}
  \label{prop_complete_multipartite_graph}
  For each complete multipartite graph $\mathcal{G}=K_{a_1,\dots,a_r}$ with $r\ge 3$ we have
  $$
    M(\mathcal{G})=n+{n\choose 2} +\sum_{U\subseteq \{1,\dots,r\}\,:\, \#U\equiv 1\pmod 2,\# U\ge 3} \prod_{i\in U} a_i,
  $$
  where $n=\sum_{i=1}^r a_i$.
\end{proposition}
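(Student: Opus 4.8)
The plan is to exploit the fact that the information part $c^S_I$ of a codeword is constant on each colour class. Write $V_1,\dots,V_r$ for the $r$ vertex classes, put $s_j=\#(S\cap V_j)$, and let $t=\#S=\sum_j s_j$. Since a vertex $w\in V_j$ is adjacent precisely to the vertices outside $V_j$, the entry of $c^S_I$ at $w$ equals $(t-s_j)\bmod 2$, a value depending only on the class $j$; hence $\supp(c^S_I)$ is a union of entire classes. It is convenient to record $P=\{j:\ s_j\text{ odd}\}$ and to note $t\equiv \#P\pmod 2$. A short parity computation then gives: if $\#P$ is even, $\supp(c^S_I)=\bigcup_{j\in P}V_j$, while if $\#P$ is odd, $\supp(c^S_I)=\bigcup_{j\notin P}V_j$. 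Because all classes are nonempty and pairwise disjoint, one union of classes is contained in another iff the corresponding index sets are contained, a fact I use repeatedly.

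The cases $\#S\le 2$ are immediate. For $\#S=1$ the codeword $c^S$ is always minimal, contributing $n$ codewords. For $\#S=2$ any two vertices have a common neighbour, since $r\ge 3$ guarantees a third (nonempty) class, so by Lemma~\ref{lemma_common_neighbor} all $\binom{n}{2}$ such codewords are minimal.

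The heart of the argument is the case $\#S\ge 3$, where I claim that $c^S$ is minimal iff $s_j\le 1$ for all $j$ and $\#P$ is odd with $\#P\ge 3$; equivalently, $S$ meets an odd number $u\ge 3$ of classes in a single vertex each. Granting this, the number of such $S$ is $\prod_{i\in U}a_i$ for each admissible set $U=P$, and summing over all odd $U$ with $\#U\ge 3$ yields the third term, so that adding the contributions of $\#S\le 2$ gives the stated value of $M(\mathcal G)$. I prove the claim via Lemma~\ref{lemma_supp_restricted_to_I}. For non-minimality, suppose $S$ is not of the stated form. If some $s_j\ge 2$, delete two vertices of $V_j$ from $S$ to obtain $\emptyset\neq T\subsetneq S$ (nonempty as $\#S\ge 3$) with the same class parities, hence $c^T_I=c^S_I$; if instead every $s_j\le 1$ but $\#P=\#S$ is even, so $\#P\ge 4$, delete two of the chosen vertices to obtain $T$ with $P(T)=P\setminus\{j_1,j_2\}$ of even size, whence $\supp(c^T_I)=\bigcup_{j\in P(T)}V_j\subseteq\bigcup_{j\in P}V_j=\supp(c^S_I)$. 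In either case $c^S$ is non-minimal.

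For the reverse direction---the main obstacle---I must show that a set $S$ of the stated form really is minimal, i.e.\ that no $\emptyset\neq T\subsetneq S$ satisfies $\supp(c^T_I)\subseteq\supp(c^S_I)$. Here $P=U$ has odd size $u\ge 3$, so $\supp(c^S_I)=\bigcup_{j\notin U}V_j$. Any proper nonempty $T\subsetneq S$ selects the single vertex of each class in some proper nonempty $U'\subsetneq U$, so $P(T)=U'$. If $\#U'$ is even, then $\supp(c^T_I)=\bigcup_{j\in U'}V_j$, and containment would force $U'\subseteq\{1,\dots,r\}\setminus U$, impossible since $\emptyset\neq U'\subseteq U$; if $\#U'$ is odd, then $\supp(c^T_I)=\bigcup_{j\notin U'}V_j$, and containment would force $U\subseteq U'$, impossible since $U'\subsetneq U$. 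Thus no such $T$ exists and $c^S$ is minimal. I expect this direction to require the most care, since it hinges on tracking both parity regimes for $T$ simultaneously and on the nonemptiness of the classes to convert set containment into index-set containment; the non-minimality step and the final counting are comparatively routine.
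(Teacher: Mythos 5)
Your proof is correct and takes essentially the same route as the paper: the same per-class parity description of $c^S_I$, the same treatment of $\#S\le 2$ via Lemma~\ref{lemma_common_neighbor}, the same ``delete two vertices from one class'' reduction for non-minimality, and minimality exactly for sets meeting an odd number $\ge 3$ of classes once each. The only difference is that you spell out in full the final minimality verification (tracking both parity regimes of $T$), a step the paper dispatches with ``similar as in the proof of Proposition~\ref{prop_complete_graph}''.
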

\begin{proof}
  Let us denote the $r$ vertex classes by $A_1,\dots, A_r$. Given a non-empty subset $S$ of the vertex set, we set $A_i'=A_i\cap S$ for all $1\le i\le r$. 
  Let $v\in A_j$ for some $1\le j\le r$. Then, the value of $c^S_I$ at position $v$ is given by $\#\left(S\backslash A_j\right)\,\operatorname{rem}\, 2$. 
  If $\# S=1$, then $c^S$ is minimal. If $\#S=2$, then we can use Lemma~\ref{lemma_common_neighbor} to deduce that $c^S$ is minimal. If $\#S\ge 3$ and 
  $\#\left(S\cap A_j\right)\ge 2$ for some index $1\le j\le r$, then we can choose a two-element subset $U\subseteq S\cap A_j$ and use 
  $c^{S\backslash U}_I\subseteq c^{S}_I$ to conclude that $c^S$ is non-minimal. It remains to consider the cases where $\# S\ge 3$ and $\#\left(S\cap A_i\right)\le 1$ 
  for all $1\le i\le r$. Similar as in the proof of Proposition~\ref{prop_complete_graph}, we easily conclude that $c^S$ is minimal iff $\#S\equiv 1\pmod 2$.
\end{proof}
 
We remark that $K_n=K_{1,\dots,1}$, where the complete multipartite graph has exactly $n$ vertex classes with cardinality $1$ each.

\medskip


For each integer $n\ge 2$ we denote by $P_n$ the graph whose edges are given by $E=\{\{i,i+1\}\,:\, 1\le i\le n-1\}$. The graph $P_n$ is also called a 
path of order $n$.
 
\begin{proposition}
  \label{prop_path}
  For each integer $n\ge 1$ we have
  $$
    M(P_n)=\left\lfloor\frac{(n+1)^2}{4}\right\rfloor=\left\lfloor\frac{n+1}{2}\right\rfloor\cdot\left\lceil\frac{n+1}{2}\right\rceil.
  $$ 
\end{proposition}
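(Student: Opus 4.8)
The plan is to characterise the minimal codewords of $C(P_n)$ explicitly and then count them. Write $S\subseteq\{1,\dots,n\}$ for the set indexing the systematic support and $x=\mathbf 1_S$ for its characteristic vector. Since the neighbours of a vertex $v$ in $P_n$ are $v-1$ and $v+1$ (whenever these lie in $\{1,\dots,n\}$), the information part is $c^S_I=y$ with $y_v=x_{v-1}+x_{v+1}$, under the convention $x_0=x_{n+1}=0$. I will show that $c^S$ is minimal if and only if $S$ is an arithmetic progression of common difference $2$, i.e.\ $S=\{a,a+2,\dots,b\}$ with $1\le a\le b\le n$ and $a\equiv b\pmod 2$ (singletons being the case $a=b$).

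Two structural facts about $y$ drive everything. First, a parity splitting: for even $v$ the value $y_v$ depends only on the odd elements of $S$, and for odd $v$ only on the even elements. Writing $S=S_o\cupdot S_e$ for the odd and even parts, the supports of $c^{S_o}_I$ and $c^{S_e}_I$ lie in disjoint coordinate sets (even, resp.\ odd, positions), so there is no cancellation and $\supp(c^S_I)=\supp(c^{S_o}_I)\cupdot\supp(c^{S_e}_I)$. Second, for a single-parity set, decomposing $S$ into its maximal step-$2$ runs $[c_i,d_i]$ gives $\supp(c^S_I)=\bigcup_i\{c_i-1,d_i+1\}\cap\{1,\dots,n\}$; in particular a single run $\{a,a+2,\dots,b\}$ yields $\supp(c^S_I)=\{a-1,b+1\}\cap\{1,\dots,n\}$.

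For the ``only if'' direction I argue the contrapositive via Lemma~\ref{lemma_supp_restricted_to_I}: if $S$ is not such a progression I exhibit a proper nonempty $T\subsetneq S$ with $\supp(c^T_I)\subseteq\supp(c^S_I)$. If $S$ has elements of both parities, take $T=S_o$; the parity splitting gives $\supp(c^{S_o}_I)\subseteq\supp(c^S_I)$ at once. If $S$ is single-parity but disconnected (at least two runs), take $T$ to be the first run, whose two boundary points are among the boundary points of $S$. For the ``if'' direction, let $S=\{a,a+2,\dots,b\}$ be a single run, so $\supp(c^S_I)\subseteq\{a-1,b+1\}$. Any proper nonempty $T\subsetneq S$ omits at least one element of $S$, and the run decomposition of $T$ then produces a boundary point strictly between $a-1$ and $b+1$ and inside $\{1,\dots,n\}$ (namely $\min(T)-1$ if $\min(T)>a$, else $\max(T)+1$ if $\max(T)<b$, else the right end of the first run); hence $\supp(c^T_I)\not\subseteq\supp(c^S_I)$, and $c^S$ is minimal. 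The small cases $\#S\le 2$ are consistent with Lemma~\ref{lemma_common_neighbor}, since a step-$2$ pair is exactly a pair with a common neighbour.

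Finally I count the step-$2$ progressions in $\{1,\dots,n\}$ by length. A progression of length $\ell\ge 1$ is determined by its smallest element $a$ subject to $1\le a\le n-2(\ell-1)$, giving $n-2\ell+2$ choices whenever this is positive. Summing $\sum_{\ell=1}^{L}(n-2\ell+2)$ with $L=\lfloor(n+1)/2\rfloor$ simplifies to $L\,(n+1-L)=\lfloor(n+1)/2\rfloor\cdot\lceil(n+1)/2\rceil=\lfloor(n+1)^2/4\rfloor$, as claimed. The main obstacle is the characterisation, not the count: the decisive steps are the parity splitting and the boundary description of $\supp(c^S_I)$, which collapse the a priori messy search for a witness $T$ into the two transparent choices above.
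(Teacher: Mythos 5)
Your proof is correct and follows essentially the same route as the paper: both characterize the minimal codewords as exactly the $c^S$ where $S$ is a single distance-$2$ chain $\{a,a+2,\dots,b\}$, using the boundary description $\supp(c^S_I)=\{a-1,b+1\}\cap\{1,\dots,n\}$ and the decomposition of a general $S$ into maximal chains with pairwise disjoint supports (your parity splitting is just this decomposition made explicit, since chains are single-parity by definition), and then count chains by the same enumeration. Your explicit witness boundary points in the ``if'' direction even patch a small sloppiness in the paper's claim that every proper subset $T\subsetneq S$ decomposes into $r\ge 2$ maximal chains.
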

\begin{proof}
  Each non-zero codeword of $C\!\left(P_n\right)$ is given as $c^S$ for some subset $\emptyset\neq S\subseteq \{1,\dots, n\}$. For $\# S=1$ the codeword $c^S$ is minimal. Given $S$, 
  a maximal distance-$2$ chain $U$ is a subset of $S$ of the form $U=\{a,a+2,\dots,b-2,b\}$, where $a\equiv b\pmod 2$ and $a-2,b+2\notin S$. If 
  $U=\{a,a+2,\dots,b-2,b\}\neq\emptyset$ is a (maximal) distance-$2$ chain, then $\supp(c^U_I)=\{n+a-1,n+b+1\}\cap\{n+1,\dots,2n\}$. We have $\supp(c^U_I)=\emptyset$ 
  iff $a=1$ and $b=n$. For a suitable integer $r\ge 1$ let $U_1,\dots, U_r$ be the unique decomposition of $S$ into maximal distance-$2$ chains. We directly conclude that 
  the supports of $c^{U_i}$ and $c^{U_j}$ are disjoint for all $1\le i<j\le r$. Thus, $c^S$ cannot be minimal if $r\ge 2$ since 
  $\supp\!\left(c^{U_i}\right)\subseteq \supp\!\left(c^S\right)$ and $\supp\!\left(c^{U_i}\right)\neq \emptyset$ for $1\le i\le r$. Now suppose that $S$ itself is 
  a maximal distance-$2$ chain, i.e., there exist integers $a$ and $b$ with $S=\{a,a+2,\dots,b-2,b\}$. Each proper subset $\emptyset \neq T\subsetneq S$ has a decomposition 
  into $r\ge 2$ maximal distance-$2$ chains $U_1,\dots, U_r$. Note that $\# \supp\!\left(c^{U_i}_I\right)\ge 1$ for all $1\le i\le r$. So, if $\supp(c^T_I)\subseteq \supp(c^S_I)$, 
  then we have $r=2$, $\supp(c^T_I)= \supp(c^S_I)=\{a-1,b+1\}$, and $\{1,n\}\subseteq U_1\cup U_2$. From the formula for $\supp(c^{U_i}_I)$ we conclude 
  $T=U_1\cup U_2=\{1,3,\dots, a-2,b+2,b+4,\dots,n\}\not\subseteq S$ -- contradiction. Thus, $c^S$ is minimal. 
  
  \medskip
  
  Counting the maximal distance-$2$ chains gives
  \begin{eqnarray*}
    M(P_n)&=&\#\left\{ (a,b)\,:\,1\le a\le b\le n, a\equiv b\pmod 2\right\} \\ 
    &=& \sum_{i=1}^n \left\lceil \frac{n+1-i}{2}\right\rceil=\left\lfloor\frac{(n+1)^2}{4}\right\rfloor=\left\lfloor\frac{n+1}{2}\right\rfloor\cdot\left\lceil\frac{n+1}{2}\right\rceil.
  \end{eqnarray*}  
\end{proof}  
 
Note that the formula for $M(P_n)$ is also valid for the case $P_2=K_2=K_{1,1}$. 

For each integer $n\ge 3$ we denote by $C_n$ the graph whose edges are given by $E=\big\{\{i,i+1\}\,:\, 1\le i\le n-1\big\}\cup\big\{\{1,n\}\big\}$. The graph $C_n$ is also called a 
cycle of order $n$. With $\tau\colon\mathbb{Z}\to\{1,\dots,n\}$ being the unique mapping with $\tau(z)\equiv z\pmod{n}$ for all $z\in\mathbb{Z}$, we can also write the edge set of 
$C_n$ as $\big\{\{\tau(i),\tau(i+1)\}\,:\, 1\le i\le n\big\}$. The proof of Proposition~\ref{prop_path} can be adjusted slightly to determine $M(C_n)$. 

\begin{proposition}
  \label{prop_cycle}
  For each integer $n\ge 3$ we have
  $$
    M(C_n)=\left\{\begin{array}{rcl}\frac{n^2-2n+4}{2}&:&n\equiv 0\pmod 2,\\ n^2-n+2&:&\text{otherwise.}\end{array}\right.
  $$ 
\end{proposition}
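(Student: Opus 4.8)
The plan is to adapt the proof of Proposition~\ref{prop_path}, replacing linear distance-$2$ chains by cyclic ones and keeping careful track of the wrap-around. I identify $V$ with $\mathbb{Z}/n\mathbb{Z}$ through $\tau$, so that for $\emptyset\neq S\subseteq V$ the value of $c^S_I$ at a vertex $v$ is the parity of $\#(\{\tau(v-1),\tau(v+1)\}\cap S)$; it equals $1$ precisely when exactly one of the two neighbours of $v$ lies in $S$. The decisive structural observation is that the ``distance-$2$ graph'' on $V$, obtained by joining $i$ to $\tau(i\pm 2)$, is a single $n$-cycle when $n$ is odd but splits into the two parity classes, each an $(n/2)$-cycle, when $n$ is even. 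I call a maximal arc of $S$ inside such a component a maximal distance-$2$ chain. Going once around a component, each place where membership in $S$ switches contributes exactly one element to $\supp(c^S_I)$ (sitting in the gap between the two switching vertices); thus $\supp(c^S_I)$ is, component by component, the ``transition set'' of $S$.

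With this dictionary the characterisation of minimal codewords becomes transparent. A single proper distance-$2$ chain has exactly two switches, so $\#\supp(c^S_I)=2$, while a chain that exhausts a whole component has no switches and gives $c^S_I=\mathbf{0}$. If $S$ decomposes into $r\ge 2$ maximal chains $U_1,\dots,U_r$, then the two switches of $U_1$ are among the switches of $S$, hence $\supp(c^{U_1}_I)\subseteq\supp(c^S_I)$ with $\emptyset\neq U_1\subsetneq S$, and Lemma~\ref{lemma_supp_restricted_to_I} shows $c^S$ is non-minimal; so minimality forces $r=1$. Conversely, for a single chain $U$ I would verify minimality in two cases: if $U$ is a proper arc, any $\emptyset\neq T\subsetneq U$ creates a switch strictly inside $U$, producing a support element outside the two ends of $U$, so $\supp(c^T_I)\not\subseteq\supp(c^U_I)$ and $c^U$ is minimal (the sub-case $\#U=2$ is also immediate from Lemma~\ref{lemma_common_neighbor}); if $U$ is a whole component then $c^U_I=\mathbf{0}$, and since any proper nonempty subset of a single component still has at least one switch, we get $c^T_I\neq\mathbf{0}$ for all $\emptyset\neq T\subsetneq U$ and conclude via Lemma~\ref{lemma_zero_sum_characterization}.

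The genuinely new phenomenon compared with the path, and the point I expect to require the most care, is exactly these full-component chains where the information word vanishes. For $n$ odd the whole vertex set is a single distance-$2$ component, so the all-ones codeword is minimal; for $n$ even the two parity classes play this role and contribute two such codewords. Getting the bookkeeping of the wrap-around right --- in particular that a full component yields $c^S_I=\mathbf{0}$ and must be treated through Lemma~\ref{lemma_zero_sum_characterization} rather than through the two-element support description valid for proper chains --- is the main obstacle; everything else is a cyclic rerun of the path argument.

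It remains to count the maximal distance-$2$ chains, which is the only routine step. On a cycle of length $L$ the number of nonempty arcs is $L(L-1)+1$: there are $L$ arcs of each length $1,\dots,L-1$ together with the one full cycle. For $n$ even there are two components with $L=n/2$, giving
$$
  M(C_n)=2\left(\frac{n}{2}\left(\frac{n}{2}-1\right)+1\right)=\frac{n^2-2n+4}{2},
$$
in agreement with the stated formula; for $n$ odd there is a single component with $L=n$, giving $M(C_n)=n(n-1)+1=n^2-n+1$. Note that in either case the additive constant equals the number of components, i.e.\ the number of full-component minimal codewords (those with $c^S_I=\mathbf{0}$).
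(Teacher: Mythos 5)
Your proposal follows essentially the same route as the paper: decompose $S$ into maximal distance-$2$ chains of the cyclic distance-$2$ graph, rule out $r\ge 2$ chains via Lemma~\ref{lemma_supp_restricted_to_I}, show single chains are minimal, treat the full-component chains (those with $c^S_I=\mathbf{0}$) through Lemma~\ref{lemma_zero_sum_characterization}, and count arcs. Your ``transition set'' bookkeeping is in fact a small improvement on the paper's write-up: the paper asserts that every $\emptyset\neq T\subsetneq S$ decomposes into $r\ge 2$ maximal chains, which is false when $T$ is itself a sub-arc of the chain $S$ (then $r=1$), whereas your observation that any proper nonempty subset creates a switch strictly inside the arc, hence a support element of $c^T_I$ outside $\supp(c^S_I)$, handles exactly this case correctly.

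There is, however, one point you must address head-on: for odd $n$ you derive $M(C_n)=n(n-1)+1=n^2-n+1$, while the proposition you were asked to prove states $n^2-n+2$, and you present your number without comment. As a proof of the statement as printed, your write-up therefore does not close -- it contradicts it. The resolution is that your count is the correct one and the printed formula contains an off-by-one error: the paper's own proof ends with $M(C_n)=1+n\cdot(n-1)$, which equals $n^2-n+1$, not $n^2-n+2$ as typeset there; and the paper's remark that the formula is also valid for $C_3=K_3$ only works for $n^2-n+1$, since $M(K_3)=2^{2}+\binom{3}{2}=7$ by Proposition~\ref{prop_complete_graph}, while $3^2-3+2=8$ would exceed the $2^3-1=7$ nonzero codewords available in a binary code of dimension $3$. (Your even case, $(n^2-2n+4)/2$, agrees with the paper; there too the paper's intermediate expression $2+n\cdot\frac{n-2}{2}+2$ has a stray ``$+2$'', but its final formula is right.) So: your mathematics is sound and your method matches the paper's, but when a correct derivation yields a value different from the claimed one, you should say so explicitly and identify the discrepancy rather than leave it silent.
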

\begin{proof}
  Each non-zero codeword of $C\!\left(C_n\right)$ is given as $c^S$ for some subset $\emptyset\neq S\subseteq \{1,\dots, n\}$. For $\# S=1$ the codeword $c^S$ is minimal. Given $S$, 
  a distance-$2$ chain $U$ is a subset of $S$ of the form $U=\{\tau(a),\tau(a+2),\dots,\tau(b-2),\tau(b)\}$, where $a\equiv b\pmod 2$ and $1\le a\le b\le 2n$. 
  We call $U$ maximal (in $S$) if either $\tau(a-2),\tau(b+2)\notin S$ or $\tau(a-2)=\tau(b)$. The meaning is that neither $\{\tau(a),\tau(a+2),\dots,\tau(b),\tau(b+2)\}$ 
  nor $\{\tau(a-2),\tau(a),\dots,\tau(b-2),\tau(b)\}$ is a proper superset of $U$ that is a subset of $S$, i.e., we cannot enlarge $U$ to a strictly larger distance-$2$ chain. 
  Given a distance-$2$ chain $U=\{\tau(a),\tau(a+2),\dots,\tau(b-2),\tau(b)\}$ we have $\supp(c^U_I)=\emptyset$ if $\tau(a-2)=b$ and $\supp(c^U_I)=\{n+\tau(a-1),n+\tau(b+1)\}$ 
  otherwise. For a suitable integer $r\ge 1$ let $U_1,\dots, U_r$ be the unique decomposition of $S$ into maximal distance-$2$ chains. We directly conclude that the supports 
  of $c^{U_i}$ and $c^{U_j}$ are disjoint for all $1\le i<j\le r$. Thus, $c^S$ cannot be minimal if $r\ge 2$. Next we show that $c^S$ is minimal iff $S$ is a (maximal) 
  distance-$2$ chain itself. Each subset $\emptyset\neq T\subsetneq S$ has a decomposition into $r\ge 2$ maximal distance-$2$ chains $U_1,\dots, U_r$, 
  where $\#\supp(c^{U_i}_I)=2$ for all $1\le i\le r$, which contradicts $\cup_{i=1}^r \supp(c^{U_i}_I)\subseteq \supp(c^S_I)$. Thus, it remains to count the number of 
  different maximal distance-$2$ chains $U=\{\tau(a),\tau(a+2),\dots,\tau(b-2),\tau(b)\}$. 
  
  If $n$ is even, then the case $\tau(a-2)=\tau(b)$ can occur exactly two times, i.e., for the cases $\{1,3,\dots,n-1\}$ and $\{2,4,\dots,n\}$. Otherwise, we can start at any 
  vertex $1\le a\le n$ and choose $b=a+2j$, where $0\le j\le (n-4)/2$ since $j=(n-2)/2$ would yield $\tau(a-2)=\tau(b)$. Thus, if $n$ is even, we have 
  $M(C_n)=2+n\cdot\tfrac{n-2}{2}+2=\tfrac{n^2-2n+4}{2}$. If $n$ is odd, then the case $\tau(a-2)=\tau(b)$ occurs iff $U=\{1,\dots,n\}$. Otherwise, we can start at any 
  vertex $1\le a\le n$ and choose $b=a+2j$, where $0\le j\le n-2$. Thus, if $n$ is odd, we have $M(C_n)=1+n\cdot(n-1)=n^2-n+2$  
\end{proof}
  
Note that the formula for $M(C_n)$ is also valid for the case $C_3=K_3$. 

In a bipartite graph $\mathcal{G}$ we may generalizes the idea of a distance-$2$ chain as follows. We can build up a new graph $\mathcal{G}'$ 
with the same vertex set as $\mathcal{G}$. Two vertices in $\mathcal{G}'$ are connected by an edge, by definition, if they are at distance exactly $2$ in $\mathcal{G}$. Similar 
as in the proof of Proposition~\ref{prop_path} one can show that for each minimal codeword $c^S$ the set $S$ induces a connected subgraph in $\mathcal{G}'$. However, $c^S$ can 
be non-minimal for the vertex set $S$ of a connected subgraph of $\mathcal{G}'$, i.e., we may only conclude an upper bound on $M(\mathcal{G})$. This e.g.\ happens in $K_{a,b}$ 
provided that $a$ and $b$ are large enough.     

Another variant of a distance-$2$ chain can lead to lower bounds.
\begin{definition}
  In a graph $\mathcal{G}$ an even path between two vertices $u$ and $v$ is a sequence of distinct vertices $\left[v_0,\dots,v_l\right]$ such that $v_0=u$, $v_l=v$, $l$ is an 
  even positive integer, and $\left\{v_i,v_{i+1}\right\}$ is an edge for all $0\le i\le l-1$. We call $\left[v_0,\dots,v_l\right]$ a shortest even path between 
  $u$ and $b$ if $l$ is minimal.
\end{definition}
Note that even in a connected graph there does not need to exist a shortest even path for two given vertices. Moreover, in the case of existence it does not need to be 
unique. 

\begin{lemma}
  \label{lemma_minimal_codeword_sufficient}
  Let $\mathcal{G}$ be a graph and $C$ be the associated binary linear code. For each shortest even path $\left[v_0,\dots,v_l\right]$ the codeword 
  $c^S$, where $S=\{0\le i\le l\,:\,i\equiv 0\pmod 2\}$, is minimal.
\end{lemma}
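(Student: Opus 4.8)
The plan is to read $S$ as the set of even-indexed path vertices $\{v_0,v_2,\dots,v_l\}$ (writing $l=2m$ with $m\ge 1$, so $\#S=m+1\ge 2$) and to establish minimality through Lemma~\ref{lemma_supp_restricted_to_I}: it suffices to show that no proper nonempty subset $T\subsetneq S$ satisfies $\supp(c^T_I)\subseteq\supp(c^S_I)$. Since $C$ is generated by $[I_{\#V}|A]$, the entry of $c^R_I$ at a vertex $w$ equals $\#(N(w)\cap R)\bmod 2$, i.e.\ $w\in\supp(c^R_I)$ iff $w$ has an odd number of neighbors in $R$; this is the only computational fact I need. The witnesses I intend to exploit are the $m$ odd-indexed path vertices $v_1,v_3,\dots,v_{2m-1}$.

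The key structural step, and the place where the shortest-even-path hypothesis is essential, is the claim that for each $i$ the only neighbors of $v_{2i+1}$ inside $S$ are its two path-neighbors $v_{2i}$ and $v_{2i+2}$. I would prove this by contradiction: suppose $v_{2i+1}$ were adjacent to some other even-indexed vertex $v_{2j}$ with $j\notin\{i,i+1\}$. If $j<i$, splicing in the chord $\{v_{2j},v_{2i+1}\}$ produces the sequence $v_0,\dots,v_{2j},v_{2i+1},v_{2i+2},\dots,v_{2m}$ of length $2m-2(i-j)$; if $j>i+1$, the chord $\{v_{2i+1},v_{2j}\}$ produces $v_0,\dots,v_{2i+1},v_{2j},\dots,v_{2m}$ of length $2m-2(j-i-1)$. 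In both cases the two index ranges being concatenated are disjoint, so the vertices remain distinct and we obtain a genuine path; its length is even, positive, and strictly smaller than $2m=l$, contradicting the minimality of the even path. Hence $v_{2i+1}$ has exactly the two neighbors $v_{2i},v_{2i+2}$ in $S$. I expect the main care here to be the bookkeeping of the two chord cases and confirming the resulting vertex sequences are genuine paths of positive even length.

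With this in hand the conclusion is short. Since each $v_{2i+1}$ has exactly two (an even number of) neighbors in $S$, none of them lies in $\supp(c^S_I)$. Now fix any $\emptyset\neq T\subsetneq S$. If some consecutive pair $(v_{2i},v_{2i+2})$ is split by $T$, meaning exactly one of the two lies in $T$, then $v_{2i+1}$ has precisely one neighbor in $T$, so $v_{2i+1}\in\supp(c^T_I)\setminus\supp(c^S_I)$ and thus $\supp(c^T_I)\not\subseteq\supp(c^S_I)$. If no pair is split, then $v_{2i}\in T\iff v_{2i+2}\in T$ for every $i$, forcing all of $v_0,v_2,\dots,v_{2m}$ to share the same membership in $T$, i.e.\ $T=\emptyset$ or $T=S$; both are excluded. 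Every proper nonempty $T$ is therefore handled, and Lemma~\ref{lemma_supp_restricted_to_I} yields that $c^S$ is minimal.
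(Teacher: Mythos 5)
Your proof is correct, and it follows the same skeleton as the paper's: use the shortest-even-path hypothesis to show that each odd-indexed vertex $v_{2i+1}$ has exactly its two path-neighbors $v_{2i}$ and $v_{2i+2}$ among $S$ (the paper asserts this in one line, ``$e\in\{o-1,o+1\}$ since $l$ is minimal,'' leaving implicit the chord-splicing you carry out explicitly, and your bookkeeping is right -- both spliced sequences have disjoint index ranges, hence distinct vertices, and even length strictly between $0$ and $l$), deduce that $c^S_I$ vanishes at all odd positions, and then exhibit for every $\emptyset\neq T\subsetneq S$ an odd position lying in $\supp\!\left(c^T_I\right)\setminus\supp\!\left(c^S_I\right)$. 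The one genuine difference is the concluding step, and there your version is actually tighter than the paper's. The paper inspects only the extreme indices $t_{\min}$ and $t_{\max}$ of $T$ and asserts that $t_{\min}=0$ together with $t_{\max}=l$ forces $T=S$; as literally stated this fails for $l\ge 4$ (take $T=\{0,l\}$, which has the right extremes but omits the interior even indices), so the paper's argument needs the additional observation that a proper $T$ containing both endpoints must have an interior boundary. Your ``split consecutive pair'' formulation supplies exactly this: whenever $T$ is proper and nonempty, some pair $\left(v_{2i},v_{2i+2}\right)$ meets $T$ in exactly one element, and then -- using the structural claim once more to exclude other neighbors of $v_{2i+1}$ inside $S$ -- the vertex $v_{2i+1}$ has exactly one neighbor in $T$, hence lies in $\supp\!\left(c^T_I\right)$ but not in $\supp\!\left(c^S_I\right)$. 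Invoking Lemma~\ref{lemma_supp_restricted_to_I} then finishes the proof, just as intended in the paper; in short, same route, with the endpoint case analysis replaced by a boundary argument that covers a case the paper's write-up glosses over.
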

\begin{proof}
  As an abbreviation we set $E=\{0\le i\le l\,:\,i\equiv 0\pmod{2}\}$ and $O=\{0\le i\le l\,:\,i\equiv 1\pmod{2}\}$. Let $o\in O$ and $e\in E$ such that 
  $\left\{v_o,v_e\right\}$ is an edge in $\mathcal{G}$. Since $l$ is minimal by assumption we have $e\in\{o-1,o+1\}$. Thus, for each $o\in O$ the vertex 
  $v_o$ has exactly two neighbors $v_e$ where $e\in E$. So, in $c^S_I$ the entries at the positions $v_o$ for $o\in O$ are equal to zero. Now let 
  $\emptyset\neq T\subsetneq S=E$ be an arbitrary subset, $t_{min}$ be the minimal element in $T$, and $t_{max}$ be the maximal element in $T$. If $t_{min}>0$, then 
  the entry at position $v_{t_{min}-1}$ in $c^T_I$ is equal to one so that $c^T\not\subseteq c^S$. Similarly, if $t_{max}<l$, then the entry at position $v_{t_{max}+1}$ 
  in $c^T_I$ is equal to one so that again $c^T\not\subseteq c^S$. Since $t_{min}=0$ and $t_{max}=l$ imply $T=S$ the codeword $c^S$ is indeed minimal.   
\end{proof}
Note that Lemma~\ref{lemma_common_neighbor} says that for a two element subset $S\subseteq\{1,\dots,n\}$ the codeword $c^S$ is exactly minimal if there 
exists a (shortest) even path of length $2$ between the elements of $S$.

\begin{lemma}
  \label{lemma_different_codewords_shortest_even_path}
  Let  $\left[v_0,\dots,v_l\right]$ and $\left[v_0',\dots,v_{l'}'\right]$ be two shortest even paths. If $$\left\{v_i\,:\,0\le i\le l, i\equiv 0\pmod 2\right\}=
  \left\{v_i'\,:\,0\le i\le l',i\equiv 0\pmod 2\right\},$$ then $\left\{v_0,v_l\right\}= \left\{v_0',v_{l'}'\right\}$.  
\end{lemma}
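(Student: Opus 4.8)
The plan is to show that the unordered endpoint pair $\{v_0,v_l\}$ is \emph{intrinsically determined} by the common even-vertex set $S=\{v_i\,:\,0\le i\le l,\ i\equiv 0\pmod 2\}$, through a characterization that makes no reference to the particular path realizing $S$. Concretely, I will identify $\{v_0,v_l\}$ as the unique pair of vertices lying in $S$ whose shortest-even-path length equals $l$. Writing $d_e(x,y)$ for the minimal length of an even path between $x$ and $y$, this is a statement purely about $\mathcal{G}$ and $S$, so once it is established I can apply it to the two paths $[v_0,\dots,v_l]$ and $[v_0',\dots,v_{l'}']$ at the same time and read off $\{v_0,v_l\}=\{v_0',v_{l'}'\}$.

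First I would do the cardinality bookkeeping. Since the vertices of a path are distinct, $S$ has exactly $l/2+1$ elements; the hypothesis that the two even-vertex sets coincide therefore forces $l/2+1=l'/2+1$, i.e.\ $l=l'$. By the definition of a shortest even path we also record $d_e(v_0,v_l)=l$ and $d_e(v_0',v_{l'}')=l'=l$, and note that $v_0'\neq v_{l'}'$ because $l'\ge 2$.

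The core of the argument is the following upper bound: for any two elements $s=v_{2i}$ and $t=v_{2j}$ of $S$ with $i<j$, the contiguous sub-sequence $[v_{2i},v_{2i+1},\dots,v_{2j}]$ is itself an even path, since it consists of distinct vertices and has positive even length $2(j-i)$; hence $d_e(s,t)\le 2(j-i)$. Because $0\le i<j\le l/2$, this length is at most $l$, with equality \emph{only} when $i=0$ and $j=l/2$, that is, only for $\{s,t\}=\{v_0,v_l\}$. Consequently every pair in $S$ other than $\{v_0,v_l\}$ satisfies $d_e<l$, while $d_e(v_0,v_l)=l$. This is exactly the promised characterization: $\{v_0,v_l\}$ is the unique pair of elements of $S$ at even-distance $l$.

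To finish, observe that $v_0'$ and $v_{l'}'$ are two distinct elements of the set $S'=S$ with $d_e(v_0',v_{l'}')=l$; by the characterization the only such pair is $\{v_0,v_l\}$, whence $\{v_0',v_{l'}'\}=\{v_0,v_l\}$. I do not expect a genuine obstacle here. The one point that needs care is the verification that a contiguous even-length sub-sequence of a shortest even path is again an (not necessarily shortest) even path, so that $d_e(s,t)$ is finite and bounded as claimed; this requires checking distinctness of the vertices and positivity of the length, both of which are immediate. It is worth emphasizing that the whole argument relies only on the definition of a shortest even path and never uses that $\mathcal{G}$ is bipartite.
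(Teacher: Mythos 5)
Your proof is correct and follows essentially the same route as the paper: establish $l=l'$ (you make the cardinality argument explicit where the paper merely asserts it), locate the endpoints $v_0',v_{l'}'$ inside the common even-vertex set of the first path, and use the contiguous even sub-path between them to contradict minimality unless they coincide with $\{v_0,v_l\}$. Your repackaging of this as an intrinsic ``unique pair at even-distance $l$'' characterization, and your explicit handling of $v_0'\neq v_{l'}'$, are minor presentational refinements of the same argument.
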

\begin{proof}
  First we note that $l=l'$. Now choose indices $0\le i,j\le l$ such that $v_0'=v_i$ and $v_l'=v_j$ with $i\equiv j\equiv 0\pmod 2$. Depending on whether $i<j$ or $i>j$,   
  either $\left[v_i,v_{i+1},\dots,v_j\right]$ or $\left[v_i,v_{i-1},\dots,v_j\right]$ is an even path from $v_0'$ to $v_l'$. Since we assume that $l$ is the shortest 
  possible length of such a path, we have $\{i,j\}=\{0,l\}$, i.e., $\left\{v_0,v_l\right\}= \left\{v_0',v_{l'}'\right\}$. 
\end{proof}

We remark that in an odd cycle, i.e., $C_n$ where $n$ is odd, the shortest even path between any two neighbored vertices uses all $n$ vertices.

\begin{lemma}
  \label{lemma_lower_bound_shortest_even_path} 
  Let $\mathcal{G}$ be a connected graph and $\mathcal{T}$ be a spanning tree of $\mathcal{G}$. Considering $\mathcal{T}$ as a bipartite graph, we denote the number 
  of vertices in the two color classes by $a$ and $b$. With this, we have $M(\mathcal{G})\ge a+b+{a\choose 2}+{b\choose 2}$.
\end{lemma}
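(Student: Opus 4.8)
The plan is to exhibit $a+b+{a\choose 2}+{b\choose 2}$ pairwise distinct minimal codewords. The first $a+b$ are the singletons $c^{\{w\}}$ ranging over all vertices $w\in V$, which are minimal by Lemma~\ref{lemma_trivial_lower_bound}. The remaining ${a\choose 2}+{b\choose 2}$ will arise from pairs of vertices lying in a common color class of $\mathcal{T}$, produced via Lemma~\ref{lemma_minimal_codeword_sufficient}.

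First I would use the standard fact that in the bipartite tree $\mathcal{T}$ two distinct vertices $u,v$ lie in the same color class if and only if the unique $\mathcal{T}$-path between them has even (hence positive) length; there are exactly ${a\choose 2}+{b\choose 2}$ such pairs. For each of them this $\mathcal{T}$-path is a sequence of distinct vertices joined by edges of $\mathcal{G}$ of even positive length, i.e.\ an even path between $u$ and $v$ in $\mathcal{G}$. Consequently a \emph{shortest} even path between $u$ and $v$ exists (it need not be the $\mathcal{T}$-path, but mere existence is all I need). Fixing, for each such pair, one shortest even path $\left[v_0,\dots,v_l\right]$ with $\{v_0,v_l\}=\{u,v\}$, I set $S_{u,v}=\{v_i\,:\,0\le i\le l,\ i\equiv 0\pmod 2\}$. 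By Lemma~\ref{lemma_minimal_codeword_sufficient} the codeword $c^{S_{u,v}}$ is minimal, and since $l\ge 2$ we have $\#S_{u,v}\ge 2$.

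Next I would check that all these codewords are pairwise distinct and distinct from the singletons. Because the rows of the systematic generator matrix are linearly independent, the systematic part $c^S_S$ is the indicator vector of $S$, so $S$ is recoverable from $c^S$ and different index sets give different codewords. As each $S_{u,v}$ has cardinality at least $2$, none of these coincides with a singleton $c^{\{w\}}$. For distinctness among themselves it suffices that the assignment $\{u,v\}\mapsto S_{u,v}$ is injective: if $S_{u,v}=S_{u',v'}$, then the two chosen shortest even paths have the same even-indexed vertex set, so Lemma~\ref{lemma_different_codewords_shortest_even_path} forces $\{u,v\}=\{v_0,v_l\}=\{v_0',v_{l'}'\}=\{u',v'\}$. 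Adding up the two disjoint families yields $M(\mathcal{G})\ge (a+b)+{a\choose 2}+{b\choose 2}$.

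The only delicate point — and the step I would be most careful about — is the distinctness of the even-path codewords, which is exactly what Lemma~\ref{lemma_different_codewords_shortest_even_path} is designed to supply; everything else reduces to the observation that same-color vertices of a tree are at even distance and hence admit an even path in $\mathcal{G}$.
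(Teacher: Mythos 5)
Your proposal is correct and follows essentially the same route as the paper's own proof: singletons for the $a+b$ count, even paths in the spanning tree between same-color vertices yielding shortest even paths in $\mathcal{G}$, minimality via Lemma~\ref{lemma_minimal_codeword_sufficient}, and distinctness via Lemma~\ref{lemma_different_codewords_shortest_even_path}. Your only addition is spelling out explicitly that $S$ is recoverable from the systematic part of $c^S$, a point the paper leaves implicit.
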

\begin{proof}
  Let $C$ be the code $C(\mathcal{G})$ induced by the graph $\mathcal{G}$. For each of the $n$ subsets $S\subseteq\{1,\dots,n\}$ of cardinality $1$ the codeword 
  $c^S$ is minimal. These are $a+b$ minimal codewords. Now consider two vertices $u$ and $v$ of the same color class in $\mathcal{T}$. Due to the construction 
  of the coloring, there exists an even path between $u$ and $v$ in $\mathcal{T}$, which is also an even path between $u$ and $v$ in $\mathcal{G}$. If the path 
  is not already a shortest even path, then pick one. So, for every two vertices $u$ and $b$ of the same color class (in $\mathcal{T}$) there exists a shortest 
  even path $\left[v_0,\dots,v_l\right]$ between $u$ and $v$ in $\mathcal{G}$, so that Lemma~\ref{lemma_minimal_codeword_sufficient} implies that $c^S$ is 
  minimal, where $S=\left\{v_i\,:\,0\le i\le l, i\equiv 0\pmod 2\right\}$. There are ${a\choose 2}+{b\choose 2}$ choices and by 
  Lemma~\ref{lemma_different_codewords_shortest_even_path} all of them lead to different minimal codewords $c^S$, where $\# S\ge 2$.      
\end{proof}

We remark that the lower bound of Lemma~\ref{lemma_lower_bound_shortest_even_path} is attained with equality in Proposition~\ref{prop_complete_bipartite_graph}, 
i.e., for complete bipartite graphs, and in Proposition~\ref{prop_two_stars}. In Theorem~\ref{thm_minimum} we will use Lemma~\ref{lemma_lower_bound_shortest_even_path} 
to determine a formula for the minimum number $m(n)$ of minimal codewords of a connected graph with $n$ vertices. 

An induced subgraph of a graph $\mathcal{G}=(V,E)$ is a graph whose vertex set is a subset $S\subseteq V$ and whose edges are given by the elements of $E$  
where both vertices are contained in $S$. If $\mathcal{G}'=(V',E')$ is an induced subgraph of $\mathcal{G}=(V,E)$ and $c^S$ a minimal codeword in $\mathcal{G}'$, 
where $S\subseteq V'$, then $c^S$ is also a minimal codeword in $\mathcal{G}$. An odd cycle is an induced subgraph that is isomorphic to $C_l$, where $l$ is odd. 
\begin{proposition}
  \label{prop_induced_odd_cycle}
  Let $\mathcal{G}=(V,E)$ be a connected graph. If $S\subseteq V$ induces an odd cycle in $\mathcal{C}$, then $c^S$ is a minimal codeword in $C(\mathcal{G})$.   
\end{proposition}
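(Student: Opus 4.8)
The plan is to invoke the characterization of non-minimality from Lemma~\ref{lemma_supp_restricted_to_I}: it suffices to show that no proper nonempty subset $T\subsetneq S$ satisfies $\supp(c^T_I)\subseteq\supp(c^S_I)$. The only input I need about the code is the standard description of the information part used throughout the paper: for any subset $U\subseteq V$ and any vertex $v$, the entry of $c^U_I$ at position $v$ equals the number of neighbors of $v$ lying in $U$, reduced modulo $2$. First I would apply this with $U=S$. Since $S$ induces a cycle, every vertex of $S$ has exactly two neighbors inside $S$, namely its two cycle neighbors, so the corresponding entry is $2\equiv 0$; hence $c^S_I$ vanishes on all of $S$ and $\supp(c^S_I)\subseteq V\setminus S$.

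The key reduction is then to restrict attention to the coordinates indexed by $S$. Because $\supp(c^S_I)\cap S=\emptyset$, it is enough to prove that for every proper nonempty $T\subsetneq S$ the vector $c^T_I$ is nonzero at some vertex $w\in S$; this immediately forces $\supp(c^T_I)\not\subseteq\supp(c^S_I)$, and the proposition follows. For $w\in S$ the only neighbors of $w$ inside $S$ are its two cycle neighbors, so, since $T\subseteq S$, the entry of $c^T_I$ at $w$ equals the number of cycle neighbors of $w$ lying in $T$, taken modulo $2$. In other words, restricted to the coordinates of $S$, the vector $c^T_I$ is exactly the image of the indicator of $T$ under the adjacency map of the cycle $C_l$ over $\F_2$.

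This turns the statement into a short combinatorial fact about $C_l$, and it is the one place where oddness of $l$ is essential, so I expect it to be the main obstacle. Labeling the cycle vertices by $\Z/l\Z$, the assumption that $c^T_I$ vanishes on all of $S$ says that for every $w$ the vertices $w-1$ and $w+1$ are either both in $T$ or both outside $T$; equivalently, membership in $T$ is invariant under the shift $x\mapsto x+2$. Since $l$ is odd we have $\gcd(2,l)=1$, so this shift is a single $l$-cycle through all vertices, whence $T$ must be either $\emptyset$ or all of $S$ — contradicting that $T$ is proper and nonempty. (This is precisely what fails for even cycles, where the shift splits into two orbits and the kernel of the adjacency map becomes two-dimensional.) Assembling the pieces, no admissible $T$ exists, and Lemma~\ref{lemma_supp_restricted_to_I} yields that $c^S$ is minimal.
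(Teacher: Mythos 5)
Your proof is correct, and it is packaged differently from the paper's. The paper first passes to the induced subgraph $\mathcal{G}'=(S,E')$: in the smaller code $C(\mathcal{G}')$ one has $c^S_I=\mathbf{0}$, so Lemma~\ref{lemma_zero_sum_characterization} applies, and minimality in $C(\mathcal{G})$ then follows from the (unproved) remark preceding the proposition that minimal codewords of induced subgraphs stay minimal in the ambient graph; moreover, the verification that $c^T_I\neq\mathbf{0}$ for all $\emptyset\neq T\subsetneq S$ is dismissed there as an easy check. You instead stay in the ambient code throughout: you use only the weaker fact that $\supp\!\left(c^S_I\right)$ is disjoint from $S$ (in $C(\mathcal{G})$ itself $c^S_I$ need not vanish, since vertices outside $S$ may have an odd number of neighbors in $S$), show that every proper nonempty $T\subsetneq S$ forces a nonzero entry of $c^T_I$ at some vertex of $S$, and conclude directly via Lemma~\ref{lemma_supp_restricted_to_I}. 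In effect you inline, for the special case $V'=S$, the coordinate-restriction argument that justifies the paper's lifting remark, which makes your write-up more self-contained on that point. You also supply the detail the paper waves off: vanishing of $c^T_I$ on the coordinates of $S$ says the indicator vector of $T$ is invariant under the shift $x\mapsto x+2$ on $\Z/l\Z$, and since $\gcd(2,l)=1$ for odd $l$ this shift has a single orbit, forcing $T\in\{\emptyset,S\}$ --- precisely the statement that the adjacency map of $C_l$ over $\F_2$ has trivial kernel iff $l$ is odd, and your parenthetical correctly identifies this as the one place where oddness enters (for even $l$ the two orbits give a two-dimensional kernel, matching the exceptional sets $\{1,3,\dots,n-1\}$ and $\{2,4,\dots,n\}$ in the proof of Proposition~\ref{prop_cycle}). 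The two arguments rest on the same computation; yours trades the paper's modular structure (subgraph code plus lifting lemma) for a direct and fully detailed one.
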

\begin{proof}
  Let $\mathcal{G}'=(S,E')$ be the subgraph induced by $S$ and $C=C(\mathcal{G}')$ the binary code associated with $\mathcal{G}'$. In $C$ we have $c^S_I=\mathbf{0}$. 
  We can easily check that $c^T_I\neq\mathbf{0}$ for all $\emptyset\neq T\subsetneq S$, so that Lemma~\ref{lemma_zero_sum_characterization} gives that $c^S$ is minimal 
  in $C$. As noted above, $c^S$ is also minimal in $C(\mathcal{G})$.  
\end{proof}

Another lower bound, using more common graph invariants, for $M(\mathcal{G})$ is:
\begin{lemma}
  \label{lower_bound_graph_parameters_1}
  Let $\mathcal{G}$ be a graph with $n$ vertices, maximum degree $\Delta$, and $t$ triangles. Then, we have $M(\mathcal{G})\ge n +{\Delta\choose 2}+t$.
\end{lemma}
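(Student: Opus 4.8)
The plan is to lower-bound $M(\mathcal{G})$ by exhibiting three disjoint families of index sets $S\subseteq\{1,\dots,n\}$, each member of which produces a minimal codeword $c^S$, and whose sizes add up to $n+{\Delta\choose 2}+t$. The key preliminary observation I would record is that because the generator matrix $[I_n\,|\,A]$ is systematic, the map $S\mapsto c^S$ is injective (the first $n$ coordinates of $c^S$ recover the indicator vector of $S$). In the binary setting ``non-equivalent'' just means ``distinct,'' so it suffices to produce $n+{\Delta\choose 2}+t$ \emph{distinct} index sets each giving a minimal codeword.

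First I would take the $n$ singletons $S=\{v\}$; these give minimal codewords, as already observed, contributing sets of cardinality $1$. Next I would fix a vertex $v_0$ of maximum degree $\Delta$ and look at its neighbourhood $N(v_0)$, which has $\Delta$ elements. For each of the ${\Delta\choose 2}$ two-element subsets $\{u,w\}\subseteq N(v_0)$, the vertex $v_0$ is a common neighbour of $u$ and $w$, so Lemma~\ref{lemma_common_neighbor} makes $c^{\{u,w\}}$ minimal; these contribute sets of cardinality $2$. Finally, each of the $t$ triangles is a three-element set $S$ of mutually adjacent vertices whose induced subgraph is a $C_3$, i.e.\ an odd cycle, so Proposition~\ref{prop_induced_odd_cycle} makes $c^S$ minimal; these contribute sets of cardinality $3$.

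The last step is to argue that no codeword is counted twice. By injectivity of $S\mapsto c^S$ this reduces to showing the listed index sets are pairwise distinct, which I would settle purely by cardinalities: the three families consist of sets of sizes $1$, $2$, and $3$, so members of different families cannot coincide, while within each family the sets are visibly distinct (distinct vertices, distinct pairs of neighbours of $v_0$, distinct triangles). This yields $n+{\Delta\choose 2}+t$ distinct minimal codewords and hence the claimed bound.

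I expect the bookkeeping of disjointness to be the only place where care is needed --- this is where a naive count risks double-counting --- but the separation by cardinality makes it routine, and each of the three minimality assertions is an immediate citation (the trivial singleton case, Lemma~\ref{lemma_common_neighbor} for the pairs, Proposition~\ref{prop_induced_odd_cycle} for the triangles). I would also note that the bound degenerates correctly when $\Delta\le 1$, where $t=0$ and it collapses to the trivial estimate of Lemma~\ref{lemma_trivial_lower_bound}.
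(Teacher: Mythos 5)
Your proposal is correct and follows essentially the same route as the paper: singletons, the ${\Delta\choose 2}$ pairs of neighbours of a maximum-degree vertex via Lemma~\ref{lemma_common_neighbor}, and the $t$ triangles via Proposition~\ref{prop_induced_odd_cycle}. Your explicit distinctness argument (injectivity of $S\mapsto c^S$ plus separation by cardinality) is left implicit in the paper, so you have merely made the same proof more careful.
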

\begin{proof}
  For each of the $n$ subsets $S\subseteq\{1,\dots,n\}$ of cardinality $1$ the codeword $c^S$ is minimal. If $v$ is a vertex of degree $\Delta$, then for any pair $S$ 
  of two neighbors of $v$ we can apply Lemma~\ref{lemma_common_neighbor} to deduce that $c^S$ is minimal. Note that there are ${\Delta\choose 2}$ 
  choices. If $S$ consists of the three vertices of a triangle, then $c^S$ is minimal, see Proposition~\ref{prop_induced_odd_cycle}. 
\end{proof}

Next we want to study the special situation where all non-zero codewords are minimal.
\begin{proposition}
  If $\mathcal{G}$ is a graph with $n\ge 1$ nodes and $M(\mathcal{G})=2^n-1$, then $\mathcal{G}=K_1$ or $\mathcal{G}=K_3$.
\end{proposition}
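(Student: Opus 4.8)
The plan is to show that the equality $M(\mathcal{G})=2^n-1$ can only hold for the complete graph $K_n$, and then to read off from Proposition~\ref{prop_complete_graph} for which $n$ it is actually attained. The first observation is a counting one: since $C(\mathcal{G})$ has dimension $n$, there are exactly $2^n-1$ nonzero codewords, and each is of the form $c^S$ for a unique $\emptyset\neq S\subseteq V$. Hence $M(\mathcal{G})=2^n-1$ is equivalent to saying that \emph{every} nonzero codeword is minimal, and I will work throughout with this reformulation.

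The heart of the argument is to force $\mathcal{G}$ to be complete. I would argue by contradiction: assume all nonzero codewords are minimal but some vertex $x$ has $\deg(x)\le n-2$. Writing $N(x)$ for its neighbourhood, I would look for a nonempty $U\subseteq V\setminus\{x\}$ with the property that every $y\in N(x)$ has an even number of neighbours in $U$; recalling that the information part satisfies $y\in\supp(c^U_I)$ iff $y$ has oddly many neighbours in $U$, this property reads $\supp(c^U_I)\cap N(x)=\emptyset$. The existence of such a $U$ is a pure linear-algebra step over $\mathbb{F}_2$: the constraints form a homogeneous system in the $n-1$ unknowns $(U_v)_{v\in V\setminus\{x\}}$ consisting of $\deg(x)$ equations, so its solution space has dimension at least $(n-1)-\deg(x)\ge 1$ and a nonzero solution exists. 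Setting $S=\{x\}\cup U$ and $T=\{x\}$, I then have $\emptyset\neq T\subsetneq S$, while $\supp(c^T_I)=N(x)$ and $\supp(c^S_I)=N(x)\,\triangle\,\supp(c^U_I)\supseteq N(x)$, the inclusion holding because the two sets are disjoint. Thus $\supp(c^T_I)\subseteq\supp(c^S_I)$, and Lemma~\ref{lemma_supp_restricted_to_I} makes $c^S$ non-minimal, a contradiction. Consequently every vertex has degree exactly $n-1$, i.e.\ $\mathcal{G}=K_n$.

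It then remains to decide for which $n$ the complete graph qualifies. For $n\in\{1,2\}$ I would simply quote the values $M(K_1)=1=2^1-1$ and $M(K_2)=2\neq 2^2-1$ recorded before Proposition~\ref{prop_complete_graph}. For $n\ge 3$, that proposition gives $M(K_n)=2^{n-1}+\binom{n}{2}$, so $M(K_n)=2^n-1$ reduces to $\binom{n}{2}=2^{n-1}-1$; this holds for $n=3$ and fails for every $n\ge 4$, where $2^{n-1}-1>\binom{n}{2}$ (true at $n=4$ and propagated by an easy induction). Collecting the surviving cases leaves exactly $\mathcal{G}=K_1$ and $\mathcal{G}=K_3$.

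The only genuinely delicate point is the degree-forcing step, and there the difficulty is conceptual rather than computational: the right move is to fix the witness $T=\{x\}$ as a singleton and hunt for its partner $U$ among the at least $2^{(n-1)-\deg(x)}$ solutions of a linear system, which manufactures a non-minimal codeword precisely when $x$ fails to be universal. Everything else — the equivalence of $M(\mathcal{G})=2^n-1$ with all codewords being minimal, and the final arithmetic on $K_n$ — is routine given the earlier lemmas. The one place I would check carefully is the boundary bookkeeping for small $n$, namely that the system is genuinely underdetermined exactly when $\deg(x)\le n-2$, since this is what pins the extremal graph down to $K_n$ and prevents the argument from over- or under-shooting at $n=1,2$.
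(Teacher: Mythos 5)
Your proof is correct, but it takes a genuinely different route from the paper's. The paper settles the question almost immediately by quoting an external result (\cite[Theorem 2(iii)]{sloane1993covering}): a binary $[N,k]_2$ code all of whose nonzero codewords are minimal must satisfy $N\ge 3(k-1)$, which with $N=2n$, $k=n$ forces $n\le 3$; it then excludes isolated vertices via $M(\mathcal{G})\le 1+M(n-1)<2^n-1$ and checks the four connected graphs on at most three vertices ($M(P_1)=1$, $M(P_2)=2$, $M(P_3)=4$, $M(K_3)=7$). You instead prove, entirely within the paper's own toolkit, the stronger structural fact that the all-minimal condition forces $\mathcal{G}=K_n$: your degree-forcing step is sound, since for a vertex $x$ with $\deg(x)\le n-2$ the $\deg(x)$ parity constraints in the $n-1$ unknowns $(U_v)_{v\in V\setminus\{x\}}$ are underdetermined, yielding $\emptyset\neq U\subseteq V\setminus\{x\}$ with $\supp\!\left(c^U_I\right)\cap N(x)=\emptyset$, so that $T=\{x\}\subsetneq S=\{x\}\cup U$ gives $\supp\!\left(c^T_I\right)=N(x)\subseteq N(x)\,\triangle\,\supp\!\left(c^U_I\right)=\supp\!\left(c^S_I\right)$ and Lemma~\ref{lemma_supp_restricted_to_I} makes $c^S$ non-minimal (note that the degenerate case $c^U_I=\mathbf{0}$ is harmless, as the lemma only needs inclusion, and could alternatively be handled by Lemma~\ref{lemma_zero_sum}). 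Your boundary bookkeeping is also right: for $n=1$ the hypothesis $\deg(x)\le n-2$ is vacuous, and $K_2$ survives the completeness step but is eliminated by the final arithmetic, where $\binom{n}{2}=2^{n-1}-1$ holds only at $n=3$ among $n\ge 3$. The trade-off: the paper's proof is shorter but leans on a nontrivial cited bound for intersecting codes, whereas yours is self-contained, reproves what is needed of that bound in this special setting, and yields as a by-product the sharper statement that among the codes $C(\mathcal{G})$ only complete graphs can have all nonzero codewords minimal --- information not visible from the paper's argument.
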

\begin{proof}
  Due to \cite[Theorem 2(iii)]{sloane1993covering} an $[N,k]_2$ code whose non-zero codewords are all minimal satisfies $N\ge 3(k-1)$. 
  In our situation we have $N=2n$ and $k=n$, so that $n\le 3$. If $\mathcal{G}$ contains an isolated vertex, then $M(\mathcal{G})\le 1+M(n-1)\le 
  1+2^{n-1}-1$, which is strictly less than $2^n-1$ for $n\ge 2$. Thus, it suffices to consider the connected graphs with up to $3$ vertices: 
  $M(P_1)=1$, $M(P_2)=2$, $M(P_3)=4$, and $M(K_3)=7$, see Proposition~\ref{prop_path} and Proposition~\ref{prop_complete_graph}.  
\end{proof} 
 
\section{Exact values for small parameters}

The aim of this subsection is to determine the exact value of $M(n)$ and $m(n)$ for $1\le n\le 10$. Given Lemma~\ref{lemma_connectivity_components} 
it suffices to consider connected graphs. We note that there are already 11\,716\,571 non-isomorphic connected graphs, which we have enumerated using 
the software package \texttt{geng} \cite{mckay1981practical}. For each connected graph $\mathcal{G}$ we determine $M(\mathcal{G})$ by exhaustive enumeration. 

\begin{center}
  \begin{tabular}{|c|c|c|c|c|c|c|c|c|c|c|}
    \hline 
    $n$     & 1 & 2 & 3 &  4 &  5 &  6 &  7 &   8 &   9 &  10 \\ 
    \hline
    $m(n)$  & 1 & 2 & 4 &  6 &  9 & 12 & 16 &  20 &  25 &  30 \\
    \hline 
    $M(n)$  & 1 & 2 & 7 & 14 & 26 & 47 & 99 & 190 & 355 & 682 \\
    \hline
  \end{tabular}    
\end{center}
The maximum $M(n)$ is attained for $3\le n\le 6$ by a 
complete graph $K_n$, while the cases $n\in\{7,8,9,10\}$ need other constructions. For $n=10$ there are $22$ isomorphism types of graphs that attain 
the maximum of $682$ minimal codewords. Those graphs are quite diverse, i.e., their number of edges lies between $21$ and $32$, the minimum degree is either $4$ or 
$5$, and the maximum degree ranges from $5$ to $9$. For $1\le n\le 10$, the minimum value $m(n)$ is attained by a path $P_n$. This observation is also true in general. 

\begin{theorem}
  \label{thm_minimum}
  For each integer $n\ge 1$ we have
  $$
    m(n)=\left\lfloor\frac{(n+1)^2}{4}\right\rfloor=\left\lfloor\frac{n+1}{2}\right\rfloor\cdot\left\lceil\frac{n+1}{2}\right\rceil.
  $$
\end{theorem}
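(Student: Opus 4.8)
The plan is to prove two matching inequalities. The upper bound $m(n)\le\left\lfloor(n+1)^2/4\right\rfloor$ is immediate: the path $P_n$ is a connected graph on $n$ vertices, and Proposition~\ref{prop_path} already gives $M(P_n)=\left\lfloor(n+1)^2/4\right\rfloor$, so the minimum of $M(\mathcal{G})$ over connected graphs on $n$ vertices is at most this value. All the substance lies in the matching lower bound, namely that every connected graph $\mathcal{G}$ on $n$ vertices satisfies $M(\mathcal{G})\ge\left\lfloor(n+1)^2/4\right\rfloor$.

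For the lower bound the key input is Lemma~\ref{lemma_lower_bound_shortest_even_path}. Given a connected graph $\mathcal{G}$ on $n$ vertices, I would fix an arbitrary spanning tree $\mathcal{T}$ and let $a$ and $b$ be the sizes of its two color classes, so that $a+b=n$. The lemma then yields $M(\mathcal{G})\ge a+b+{a\choose 2}+{b\choose 2}$. Hence it suffices to bound the right-hand side below by its minimum over all pairs of nonnegative integers with $a+b=n$, since the lemma applies in particular to the split $(a,b)$ arising from $\mathcal{T}$.

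The remaining step is a one-variable optimization. Writing $a+b=n$, one has ${a\choose 2}+{b\choose 2}=\tfrac{a^2+b^2-n}{2}$, and since $a^2+b^2=n^2-2ab$, minimizing the bound is equivalent to maximizing the product $ab$ subject to $a+b=n$; over the integers this maximum is attained at the balanced split $a=\lfloor n/2\rfloor$, $b=\lceil n/2\rceil$. Substituting this split and distinguishing the parities $n=2m$ and $n=2m+1$, I would check that $n+{a\choose 2}+{b\choose 2}$ evaluates to $m(m+1)$ and to $(m+1)^2$ respectively, each of which equals $\left\lfloor(n+1)^2/4\right\rfloor$. Together with the upper bound this completes the argument.

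I do not expect a genuine obstacle: the whole proof reduces to invoking Lemma~\ref{lemma_lower_bound_shortest_even_path} for any spanning tree followed by a routine extremal estimate for $a^2+b^2$ on the line $a+b=n$. The only point worth a word of care is that the lower bound supplied by the lemma is \emph{smallest} exactly at the balanced bipartition, so the path's value is recovered precisely rather than as a loose estimate; any spanning tree with an unbalanced coloring yields a strictly larger lower bound, and thus no connected graph can fall below the claimed minimum.
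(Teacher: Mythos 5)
Your proposal is correct and follows essentially the same route as the paper's proof: the upper bound via $M(P_n)$ from Proposition~\ref{prop_path}, and the lower bound by applying Lemma~\ref{lemma_lower_bound_shortest_even_path} to an arbitrary spanning tree and minimizing $n+\binom{a}{2}+\binom{b}{2}$ over $a+b=n$. The only (immaterial) difference is in the final optimization: the paper minimizes the quadratic over the reals at $a=n/2$ and then rounds up for odd $n$, while you carry out the exact integer optimization by maximizing $ab$ at the balanced split and checking both parities directly.
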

\begin{proof}
  Due to Proposition~\ref{prop_path} it remains to show the corresponding lower bound. To this end we apply Lemma~\ref{lemma_lower_bound_shortest_even_path}. 
  Setting $b=n-a$ we obtain the lower bound $n+{a\choose 2}+{{n-a}\choose 2}=a^2-na+\frac{n(n+1)}{2}$, which is a quadratic polynomial in $a$. Over the real 
  numbers the minimum is attained for $a=n/2$, which gives $m(n)\ge \frac{n(n+2)}{4}$. If $n$ is even this matches the statement. If $n$ is odd we can upround 
  $\frac{n(n+2)}{4}$ to $\frac{(n+1)^2}{4}$ since $m(n)$ is an integer.   
\end{proof}
We remark that all connected graphs $\mathcal{G}$ with $n$ vertices and $M(\mathcal{G})=m(n)$ are bipartite, since Proposition~\ref{prop_induced_odd_cycle} would give 
an additional minimal codeword that is not counted in Lemma~\ref{lemma_lower_bound_shortest_even_path}. If $\mathcal{T}$ is a tree such that all vertices with degree strictly 
larger than $1$ are contained on a path, then it can be easily shown that the lower bound of Lemma~\ref{lemma_lower_bound_shortest_even_path} is attained with 
equality. If the cardinalities of the two color classes of the bipartite tree $\mathcal{T}$ differ by at most $1$, then we have $M(\mathcal{T})=m(n)$, where $\mathcal{T}$ 
consists of $n$ vertices. We remark that one can also construct connected graphs $\mathcal{G}$ that contain $4$-cycles and satisfy $M(\mathcal{G})=m(n)$ for their number 
$n$ of vertices. The determination of $M(n)$ and the description of an infinite family of graphs attaining $M(\mathcal{G})=M(n)$ is an interesting open problem.

\section*{Acknowledgments} 
The author wishes to thank Romar dela Cruz for introducing him to the problem of minimal codewords and suggesting the construction of a linear code from the 
adjacency matrix of a graph.


\end{document}